\documentclass[10pt, a4paper]{article}
\usepackage[a4paper, total={6in, 9in}]{geometry}

\pdfoutput=1

\usepackage{graphicx}
\usepackage{epsfig}
\usepackage{booktabs}
\usepackage{stmaryrd}
\usepackage{url}
\usepackage{longtable}
\usepackage[figuresright]{rotating}
\usepackage[ruled,lined,linesnumbered]{algorithm2e}
\usepackage{color}
\usepackage{trivfloat}
\usepackage{multirow}

\usepackage{subcaption}
\captionsetup{compatibility=false}

\usepackage{amsmath,amstext,amsbsy,amsopn,amsthm,amscd,amsxtra,upref,amssymb}
\usepackage{bm}
\usepackage{grffile}

\usepackage[geometry]{ifsym}
\usepackage{tikz}
\usetikzlibrary{backgrounds,decorations.pathmorphing}
\usetikzlibrary{patterns}
\usetikzlibrary{external}
\usetikzlibrary{calc}
\tikzexternalize[prefix=figures/]


\newtheorem {example}    {Example}
\newtheorem {theorem}    {Theorem}
\newtheorem {lemma}      {Lemma}

\newtheorem {proposition}{Proposition}




\definecolor{lightgray}{rgb}{0.9,0.9,0.9}
\SetAlTitleFnt{textsc}
\SetAlCapFnt{\sc}


\makeatletter

\makeatother

\makeatletter

\makeatother


\trivfloat{zAlgorithmFloatTemp}
\makeatletter
\newenvironment{zAlgorithmFloat}[1][0.5cm]
	{\begin{zAlgorithmFloatTemp}\noindent\begin{lrbox}{\@tempboxa}\begin{minipage}{0.99\columnwidth}\begin{algorithm}[H]} 
	{\end{algorithm}\end{minipage}\end{lrbox}\colorbox{lightgray}{\usebox{\@tempboxa}}\end{zAlgorithmFloatTemp}}
\makeatother


\newcommand{\Real}{\mathbb{R}}
\newcommand{\Complex}{\mathbb{C}}

\newcommand{\qX}[1][k]{X^{\mathsf{adi}}_{#1}}
\newcommand{\qY}[1][k]{Y^{\mathsf{adi}}_{#1}}

\newcommand{\YY}{\tilde{Y}}
\newcommand{\cX}[1][k]{X^{\mathsf{cay}}_{#1}}

\newcommand{\cY}[1][k]{Y^{\mathsf{cay}}_{#1}}
\newcommand{\cZ}[1][k]{Z^{\mathsf{cay}}_{#1}}
\newcommand{\hX}[1][k]{X^{\mathsf{inv}}_{#1}}

\newcommand{\rX}[1][k]{X^{\mathsf{kry}}_{#1}}
\newcommand{\rY}[1][k]{Y^{\mathsf{kry}}_{#1}}

\newcommand{\lX}[1][k]{X^{\mathsf{lya}}_{#1}}
\newcommand{\LX}{X^{\mathsf{lya}}}
\newcommand{\lZ}[1][k]{Z^{\mathsf{lya}}_{#1}}
\newcommand{\lV}[1][k]{V^{\mathsf{lya}}_{#1}}
\newcommand{\lR}[1][k]{R^{\mathsf{lya}}_{#1}}

\newcommand{\hY}{\tilde{Y}}

\newcommand{\pXtil}{\tilde{X}^{\mathsf{proj}}}

\newcommand{\pHtil}{\tilde{\nice{H}}^{\mathsf{proj}}}

\newcommand{\conj}[1]{\overline{#1}}
\newcommand{\tr}{\ast}

\newcommand{\rank}{\operatorname{rank}}

\newcommand{\real}[1]{\operatorname{Re}\left(#1\right)}
\newcommand{\imag}[1]{\operatorname{Im}\left(#1\right)}

\newcommand{\ii}{\mathsf{i}}

\newcommand{\lam}{\lambda}
\newcommand{\Lam}{\Lambda}


\newcommand{\nice}[1]{\mathcal{#1}}


\newcommand{\spann}[1]{\operatorname{span}\{ #1 \}}

\newcommand{\LHP}{\mathbb{C}_-}
\newcommand{\RHP}{\mathbb{C}_+}

\newcommand{\argmin}{\operatorname{argmin}}

\newcommand{\mb}[1]{\left[\begin{array}{#1}}
\newcommand{\me}{\end{array}\right]}
\newcommand{\smb}{\left[\begin{smallmatrix}}
\newcommand{\sme}{\end{smallmatrix}\right]}

\newcommand{\change}[1]{#1}

\newcommand{\bench}[1]{\textsf{#1}}

\newcommand*\rot{\rotatebox[origin=c]{90}}

\begin{document}

\title{\sc{{RADI}: A low-rank ADI-type algorithm for large scale algebraic Riccati equations}}

\author{Peter Benner \thanks{Max Planck Institute for Dynamics of Complex Technical Systems, Magdeburg, Germany; e-mail: {benner@mpi-magdeburg.mpg.de}}
    \and Zvonimir Bujanovi\'{c} \thanks{University of Zagreb, Department of Mathematics, Zagreb, Croatia; e-mail: {zbujanov@math.hr}}
    \and Patrick K\"{u}rschner \thanks{Max Planck Institute for Dynamics of Complex Technical Systems, Magdeburg, Germany; e-mail: {kuerschner@mpi-magdeburg.mpg.de}}
    \and Jens Saak \thanks{Max Planck Institute for Dynamics of Complex Technical Systems, Magdeburg, Germany; e-mail: {saak@mpi-magdeburg.mpg.de}}}

\date{}

\maketitle

\begin{abstract}
    This paper introduces a new algorithm for solving large-scale con\-ti\-nu\-ous-time algebraic Riccati equations (CARE).
    The advantage of the new algorithm is in its immediate and efficient low-rank formulation, which is 
    a generalization of the Cholesky-factored variant of the Lyapunov ADI method.
    We discuss important implementation aspects of the algorithm, such as reducing the use of complex arithmetic and
    shift selection strategies.
    We show that there is a very tight relation between the new algorithm and three other algorithms for CARE 
    previously known in the literature---all of these seemingly different methods
    in fact produce exactly the same iterates when used with the same parameters: they are algorithmically different
    descriptions of the same approximation sequence to the Riccati solution.
    \\

\noindent 
{\bf Keywords:} matrix equations, algebraic Riccati equations, ADI iteration, low rank approximation, Hamiltonian matrix, subspace iteration \\
{\bf MSC:} 15A24, 15A18, 65F15, 65F30, 93B52

\end{abstract}


\section{Introduction}
\label{intro}

The continuous-time algebraic Riccati equation,
\begin{equation}
    \label{intro:eq:ric}
    A^\tr X + X A + Q - X G X = 0, 
\end{equation}
where
$$ 
    Q = C^\tr C, \; G = B B^\tr, \quad A\in\Real^{n \times n},\; B\in\Real^{n \times m},\; C\in\Real^{p \times n},
$$
appears frequently in various aspects of control theory, such as linear-quadratic optimal regulator problems, $H_2$ and $H_\infty$ controller design
and balancing-related model reduction. While the equation may have many solutions, for such applications one is interested in finding a so-called 
stabilizing solution: the unique positive semidefinite solution $X\in\Complex^{n\times n}$ such that the matrix $A-GX$ is stable (i.e.~all of its eigenvalues belong to
$\LHP$, the left half of the complex plane). 
\change{If the pair $(A, G)$ is stabilizable (i.e.~$\rank[A-\lam I, \;G] = n$,
for all $\lam$ in the closed right half plane), and the pair $(A, Q)$ is detectable (i.e.~$(A^\tr, Q^\tr)$ is stabilizable),
then such a solution exists \cite{LanR95,Bin14}.}
These conditions are fulfilled generically, and we assume they hold throughout the paper.

There are several algorithms for finding the numerical solution of $\eqref{intro:eq:ric}$. In the case when $n$ is small enough, one can
compute the eigen- or Schur decomposition of the associated Hamiltonian matrix 
\begin{equation}
    \label{intro:eq:ham}
    \nice{H} = \mb{cc} A & G \\ Q & -A^\tr \me,
\end{equation}
and use an explicit formula for $X$, see \cite{Lau79,Bin14}.
However, if the dimensions of the involved matrices prohibit the computation of a full eigenvalue or Schur decomposition, specialized large-scale
algorithms have to be constructed. In such scenarios, Riccati equations arising in applications have additional properties:
$A$ is sparse, and $p, m \ll n$, thus making the matrices $Q$ and $G$ of very-low rank compared to $n$.
\change{In practice, this often implies} that the sought-after solution $X$ will have a low numerical rank \cite{BujB14}, and allows for construction
of iterative methods that approximate $X$ with a series of matrices stored in low-rank factored form.
Most of these methods are engineered as generalized versions of algorithms for solving a 
large-scale Lyapunov equation \cite{Sim13,BenS13}, which is a special case of \eqref{intro:eq:ric} with $G=0$.

The alternating directions implicit (ADI) method \cite{Wac88} is a well established iterative approach for computing solutions of Lyapunov and
other linear matrix equations.
There exists an array of ADI methods \cite{BenLP08,BenLT09,LevR93,LiW02,Pen00b,Sab07,Wac88,Wac00,Wac13}, covering both the ordinary and the generalized
case. 
All of these methods have simple statements and efficient implementations \cite{Pen00b}. 
One particular advantage of ADI methods is that they are very well suited for large-scale problems: the default formulation
which works with full-size dense matrices can be transformed into a series of iteratively built approximations to the solution.  
Such approximations are represented in factored form, each factor having a very small rank compared to the dimensions of the input matrices.
This makes ADI methods very suitable for large-scale applications.

Recently, Wong and Balakrishnan \cite{WonB05,morWonB07} suggested a so-called quadratic ADI me\-thod (qADI) for solving the algebraic Riccati equation
\eqref{intro:eq:ric}.
Their method is a direct generalization of the Lyapunov ADI method, but only when considering the formulation working with full-size dense matrices.
However, in the setting of the qADI algorithm, it appears impossible to apply a so-called \change{``Li--White trick''} \cite{LiW02},
which is the usual method of obtaining a low-rank formulation of an ADI method.
Wong and Balakrishnan do provide a low-rank variant of their algorithm, but this variant has an important drawback: in each step, 
all the low-rank factors have to be rebuilt from scratch. This has a large negative impact on the performance of the algorithm.

Apart from the qADI method, there are several other methods for solving the large-scale Riccati equation that have appeared in the literature recently. 
Amodei and Buchot \cite{AmoB10}
derive an approximation of the solution by computing small-dimensional invariant subspaces of the associated Hamiltonian matrix \eqref{intro:eq:ham}.
Lin and Simoncini \cite{LinSim13} also consider the Hamiltonian matrix, and construct the solution by running subspace iterations on its Cayley transforms.
Massoudi, Opmeer, and Reis \cite{MasOR14} have shown that the latter method can be obtained from the control theory point of view as well.

\change{In this paper, we introduce a new ADI-type iteration for Riccati equations, RADI.
The derivation of RADI is not related to qADI, and it immediately gives the low-rank algorithm 
which overcomes the drawback from \cite{WonB05,morWonB07}.
The low-rank factors are built incrementally in the new algorithm: in each step, each factor is expanded by several columns and/or rows, 
while keeping the elements from the
previous steps intact. By setting the quadratic coefficient $B$ in \eqref{intro:eq:ric} to zero, our method reduces to the low-rank formulation
of the Lyapunov ADI method, see, e.g., \cite{morBenKS13,BenLP08,LiW02,Pen00b}.}

\change{A surprising result is that, despite their completely different derivations,
all of the Riccati methods we mentioned so far are equivalent: the approximations they produce
in each step are the same. This was already shown \cite{BujB14} for the qADI algorithm, and the
algorithm of Amodei and Buchot. In this paper we extend this equivalence to our new low-rank RADI method 
and the method of Lin and Simoncini. Among all these different formulations of the same
approximation sequence, RADI offers a compact and efficient implementation, and is very well suited for effective computation.}

This paper is organized as follows: in Section \ref{lrcf}, we recall the statements of the Lyapunov ADI method and the various Riccati methods,
and introduce the new low-rank RADI algorithm. 
\change{The equivalence of all aforementioned methods is shown in Section \ref{equiv}.
In Section \ref{impl} we discuss important implementation issues,
and in particular, various strategies for choosing shift parameters. 
Finally, Section \ref{num} compares the effect of different options for the algorithm on its performance 
via several numerical experiments.
We compare RADI with other algorithms for computing low-rank approximate solutions of \eqref{intro:eq:ric} 
as well: the extended \cite{HeyJ09} and rational Krylov subspace methods
\cite{SimSM13a}, and the low-rank Newton-Kleinman ADI iteration \cite{BenLP08,BenS10,BenHW15,Saa09}.
}

The following notation is used in this paper: $\LHP$ and $\RHP$ are the open left and right half plane, respectively,
while $\real{z},~\imag{z}$, $\conj{z}=\real{z}-\ii\imag{z}$, $|z|$
denote the real part, imaginary part, complex conjugate, and absolute value of a complex quantity $z$. 
For the matrix $A$, we use $A^\tr$ and $A^{-1}$ for
the complex conjugate transpose and the inverse, respectively. 
In most situations, expressions of the form $x=A^{-1}b$ are to be understood as solving the linear system $Ax=b$ of equations for $b$.
The relations $A>(\geq)0$, $A<(\leq)0$ stand for the matrix $A$ being positive or negative (semi)definite. 
Likewise, $A\geq(\leq) B$ refers to $A-B\geq(\leq)0$. 
If not stated otherwise, $\|\cdot\|$ is the Euclidean vector or subordinate matrix norm, and $\kappa(\cdot)$ is the associated condition
number.

\section{A new low-rank factored iteration}
\label{lrcf}

We start this section by stating various methods for solving Lyapunov and Riccati equations, which will be used throughout the paper.
First, consider the Lyapunov equation
\begin{equation}
	\label{lrcf:eq:lyap}
	A^\tr \LX + \LX A + Q = 0, \quad Q = C^\tr C, \quad A\in\Real^{n \times n},\; C\in\Real^{p \times n}.
\end{equation}
Here we assume that $n$ is much larger than $p$.
When $A$ is a stable matrix, the solution $\LX$ is positive semidefinite. The Lyapunov ADI algorithm \cite{Wac13} generates
a sequence of approximations $(\lX)_k$ to $\LX$ defined by
\begin{equation}	
	\label{lrcf:eq:lyap-adi}
	\left.
		\begin{array}{rcl}
			\lX[k+1/2] (A+\conj{\sigma_{k+1}} I) &=& -Q - (A^\tr - \conj{\sigma_{k+1}} I)\lX[k], \\
			(A^\tr + \sigma_{k+1} I) \lX[k+1] &=& -Q - \lX[k+1/2] (A - \sigma_{k+1} I).
		\end{array}
	\right\}
	\text{ Lyapunov ADI}
\end{equation}
We will assume that the initial iterate $\lX[0]$ is the zero matrix, although it may be set arbitrarily.
The complex numbers $\sigma_k \in \LHP$ are called shifts, and the performance of ADI algorithms
depends strongly on the appropriate selection of these parameters \cite{Sab07}; this is further discussed in the context of the RADI method
in Section~\ref{ssne}.
Since each iteration matrix $\lX$ is of order $n$, formulation \eqref{lrcf:eq:lyap-adi} is unsuitable for large values of $n$, due to the
amount of memory needed for storing $\lX \in \Complex^{n \times n}$ and to the computational time needed for solving $n$ linear systems in each half-step.
The equivalent low-rank algorithm \cite{morBenKS13,BenKS14} generates the same sequence, but represents $\lX$ in factored form $\lX = \lZ (\lZ)^\tr$
with $\lZ \in \Complex^{n \times pk}$:
\begin{equation}	
	\label{lrcf:eq:lyap-adi-lr}
	\left.
		\begin{array}{rcl}
			\lR[0] &=& C^\tr, \\
			\lV[k] &=& \sqrt{-2\real{\sigma_k}} \cdot (A^\tr + \sigma_k I)^{-1}\lR[k-1], \\
			\lR[k] &=& \lR[k-1] + \sqrt{-2\real{\sigma_k}} \cdot \lV[k], \\
			\lZ[k] &=& \mb{cc} \lZ[k-1] & \lV[k] \me.
		\end{array}
	\right\}
	\text{ low-rank Lyapunov ADI}
\end{equation}

Initially, the matrix $\lZ[0]$ is empty.
This formulation is far more efficient for large values of $n$, since the right-hand side of the linear system in each step involves only the
tall-and-skinny matrix $\lR[k-1] \in \Complex^{n \times p}$.

We now turn our attention to the Riccati equation \eqref{intro:eq:ric}. Once again we assume that $p, m \ll n$, and seek to approximate
the stabilizing solution $X$. 
Wong and Balakrishnan \cite{WonB05,morWonB07} suggest the following quadratic ADI iteration (abbreviated as qADI):
\begin{equation}	
	\label{lrcf:eq:qadi}
	\left.
		\begin{array}{rcl}
			\qX[k+1/2] (A+\conj{\sigma_{k+1}} I - G \qX[k]) &=& -Q - (A^\tr - \conj{\sigma_{k+1}} I)\qX[k], \\
			(A^\tr + \sigma_{k+1} I - \qX[k+1/2] G) \qX[k+1] &=& -Q - \qX[k+1/2] (A - \sigma_{k+1} I).
		\end{array}
	\right\}
	\text{ quadratic ADI}
\end{equation}
Again, the initial approximation is usually set to $\qX[0]=0$. Note that by inserting $G=0$ in the quadratic iteration we obtain
the Lyapunov ADI algorithm \eqref{lrcf:eq:lyap-adi}.
As mentioned in the introduction, we will develop a low-rank variant of this algorithm such that inserting $G=0$ will reduce it
precisely to \eqref{lrcf:eq:lyap-adi-lr}.
To prepare the terrain, we need to introduce two more methods for solving the Riccati equation.

In the small scale setting, the Riccati equation \eqref{intro:eq:ric} is usually solved by computing the stable invariant subspace of the
associated $2n \times 2n$ Hamiltonian matrix \eqref{intro:eq:ham}.
To be more precise, let $1 \leq k \leq n$, and
\begin{equation}
	\label{lrcf:eq:ham-inv-subspace}
	\nice{H} \mb{c} P_k \\ Q_k \me = \mb{cc} A & G \\ Q & -A^\tr \me  \mb{c} P_k \\ Q_k \me =  \mb{c} P_k \\ Q_k \me \Lam_k,
\end{equation}
where $P_k, Q_k \in \Complex^{n \times k}$ and the matrix $\Lam_k \in \Complex^{k \times k}$ is stable.
For $k=n$, the stabilizing solution of \eqref{intro:eq:ric} is given by $X = -Q_k P_k^{-1}$.
In the large scale setting, it is computationally too expensive to compute the entire $n$-dimensional stable invariant subspace of $\nice{H}$.
Thus an alternative approach was suggested in \cite{AmoB10}: for $k \ll n$, one can compute only a $k$-dimensional, stable, invariant subspace and use an
approximation
given by the formula
\begin{equation}	
	\label{lrcf:eq:ham-ric}
	\left.
	\hX = -Q_k (Q_k^\tr P_k)^{-1} Q_k^\tr.
	\quad
	\right\}
	\text{ invariant subspace approach}
\end{equation}
Clearly, $\hX[n] = X$. This approach was further studied and justified in \cite{BujB14}, where it was shown that
$$
	\hX = \qX, \text{ for all } k,
$$
if $p=1$ and the shifts used for the {qADI} iteration coincide with the Hamiltonian eigenvalues of the matrix $\Lam_k$.
In fact, properties of the approximate solution $\hX$ given in \cite{BujB14} have lead us to the definition of the 
low-rank variant of the {qADI} iteration that
is described in this paper.
\\

The final method we consider also uses the Hamiltonian matrix $\nice{H}$.
The Cayley transformed Hamiltonian subspace iteration introduced in \cite{LinSim13}
generates a sequence of approximations $(\cX)_k$ for the stabilizing solution of the Riccati equation defined by
\begin{equation}	
	\label{lrcf:eq:cay}
	\left.
		\begin{array}{rcl}
			\mb{c} M_k \\ N_k \me &=& (\nice{H} - \sigma_k I)^{-1} (\nice{H} + \conj{\sigma_k}I) \mb{c} I \\ -\cX[k-1] \me,\\
			\cX &=& -N_k M_k^{-1},
		\end{array}
	\right\}
	\text{ Cayley subspace iteration}
\end{equation}
Here $\cX[0]$ is some initial approximation and $\sigma_k \in \LHP$ are any chosen shifts (Here we have adapted the notation to fit the one of this paper).
In Section~\ref{equiv} we will show that this method is also equivalent to the {qADI} and the new low-rank RADI iterations.

\subsection{Derivation of the algorithm}
The common way of converting ADI iterations into their low-rank variants is to perform a procedure similar to the one originally done by Li and White 
\cite{LiW02} for the Lyapunov ADI method. A crucial assumption for this procedure to succeed is that the matrices participating in the linear systems in
each of the half-steps mutually commute for all $k$. 
For the Lyapunov ADI method this obviously holds true for the matrices $A^\tr + \sigma_{k+1} I$.
However, in the case of the quadratic ADI iteration 
\eqref{lrcf:eq:qadi}, the matrices $A^\tr + \sigma_{k+1} I - \qX[k] G$ do not commute in general, for all $k$, 
and neither do $A^\tr + \sigma_{k+1} I - \qX[k+1/2] G$.

Thus we take a different approach in constructing the low-rank version. 
A common way to measure the quality of the matrix $\Xi\in\Complex^{n \times n}$ as an approximation to the Riccati solution
is to compute the norm of its residual matrix
$$
    \nice{R}(\Xi) = A^\tr \Xi + \Xi A + Q - \Xi G \Xi.
$$
The idea for our method is to repetitively update the approximation $\Xi$ by forming a so-called residual equation, until its solution converges to zero.
The background is given in the following simple result.
\begin{theorem}
	\label{lrcf:tm:residual_equation}
	Let $\Xi\in\Complex^{n\times n}$ be an approximation to a solution of \eqref{intro:eq:ric}.
	\begin{itemize}
		\item[(a)]
			Let $X=\Xi + \tilde{X}$ be an exact solution of \eqref{intro:eq:ric}. Then $\tilde{X}$ is a solution to the residual equation
			\begin{equation}
				\label{lrcf:eq:residual_equation}
				\tilde{A}^\tr \tilde{X} + \tilde{X} \tilde{A} + \tilde{Q} - \tilde{X}G\tilde{X} = 0,			
			\end{equation}
			where $\tilde{A} = A - G \Xi$ and $\tilde{Q} = \nice{R}(\Xi)$.
		\item[(b)]
			Conversely, if $\tilde{X}$ is a solution to \eqref{lrcf:eq:residual_equation}, then $X = \Xi + \tilde{X}$ is a solution
			to the original Riccati equation \eqref{intro:eq:ric}. 
			Moreover, if $\Xi \geq 0 $ and $\tilde{X}$ is a stabilizing solution to \eqref{lrcf:eq:residual_equation},
			then $X = \Xi + \tilde{X}$ is the stabilizing solution to \eqref{intro:eq:ric}.
		\item[(c)]
			If $\Xi \geq 0$ and $\nice{R}(\Xi) \geq 0$, then the residual equation \eqref{lrcf:eq:residual_equation} has a unique stabilizing
solution.
		\item[(d)]
			If $\Xi \geq 0$ and $\nice{R}(\Xi) \geq 0$, then $\Xi \leq X$, where $X$ is the stabilizing solution of \eqref{intro:eq:ric}.
\end{itemize}

\end{theorem}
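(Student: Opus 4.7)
The plan is to dispatch (a) and (b) with a single algebraic expansion of $\nice{R}(X)=0$, derive (d) from a Lyapunov identity driven by the stable closed-loop matrix $A-GX$, and then obtain (c) as a direct consequence of (d) together with a standard uniqueness argument.

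For (a) and (b), I would substitute $X=\Xi+\tilde{X}$ into $A^\tr X + XA + Q - XGX = 0$ and expand. Using $G^\tr = G$ and (as is implicit for Riccati approximations) $\Xi^\tr = \Xi$, the terms independent of $\tilde{X}$ collect into exactly $\nice{R}(\Xi)=\tilde{Q}$, the terms linear in $\tilde{X}$ regroup as $(A-G\Xi)^\tr \tilde{X} + \tilde{X}(A-G\Xi) = \tilde{A}^\tr \tilde{X} + \tilde{X}\tilde{A}$, and the quadratic term is $-\tilde{X} G \tilde{X}$. Reading this identity in one direction gives (a) and in the other gives the first half of (b). For the stabilizing part of (b), the key observation is
\begin{equation*}
\tilde{A} - G\tilde{X} \;=\; A - G\Xi - G\tilde{X} \;=\; A - GX,
\end{equation*}
so if $\tilde{X}$ stabilizes the residual equation then $A-GX$ is stable, while $X=\Xi+\tilde{X}$ is a sum of two positive semidefinite matrices and hence itself positive semidefinite; uniqueness of the stabilizing psd solution of \eqref{intro:eq:ric} then finishes (b).

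For (d), I set $Y:=X-\Xi$; by (a), $Y$ satisfies the residual equation \eqref{lrcf:eq:residual_equation}. A short rearrangement recasts it as a Lyapunov equation driven by the closed-loop matrix $A-GX$:
\begin{equation*}
(A-GX)^\tr Y + Y(A-GX) \;=\; -\bigl(YGY + \nice{R}(\Xi)\bigr).
\end{equation*}
Since $A-GX$ is stable by the very definition of the stabilizing $X$, and the right-hand side is a sum of two positive semidefinite matrices, the standard integral representation of the unique Lyapunov solution yields $Y\geq 0$, which is exactly (d).

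For (c), existence is handed to us by (d): the matrix $\tilde{X}:=X-\Xi$ is positive semidefinite, solves the residual equation by (a), and makes $\tilde{A}-G\tilde{X}=A-GX$ stable, so it is a stabilizing solution of \eqref{lrcf:eq:residual_equation}. Uniqueness is the classical Sylvester argument: any two stabilizing solutions $\tilde{X}_1,\tilde{X}_2$ of \eqref{lrcf:eq:residual_equation} yield $Z=\tilde{X}_1-\tilde{X}_2$ satisfying $(\tilde{A}-G\tilde{X}_1)^\tr Z + Z(\tilde{A}-G\tilde{X}_2)=0$ with both coefficient matrices stable and hence disjoint spectra, forcing $Z=0$. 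The one piece of bookkeeping I expect to need care with is the Hermiticity of $\Xi$ in part (a), required to convert $A^\tr - \Xi G$ into $(A-G\Xi)^\tr = \tilde{A}^\tr$; once that point is noted, the entire theorem reduces to one line of algebra plus one invocation of the Lyapunov integral formula.
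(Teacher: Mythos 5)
Your proof is correct, and for parts (a) and (b) it is essentially identical to the paper's argument (including the useful caveat, which the paper leaves implicit, that $\Xi$ must be Hermitian for $A^\tr - \Xi G$ to coincide with $\tilde{A}^\tr$). The interesting divergence is in (c) and (d), where you reverse the paper's order. The paper proves (c) first and directly: it shows that stabilizability of $(A,G)$ is inherited by $(\tilde{A},G)$ via a rank argument, invokes the classical existence theorem for Riccati equations to obtain a positive semidefinite, almost-stabilizing solution of the residual equation, and then uses detectability of $(A,Q)$ to upgrade marginal stability of $\tilde A - G\tilde X$ to genuine stability; (d) then falls out of (b) and (c). You instead prove (d) first by recasting the residual equation for $Y = X - \Xi$ as the Lyapunov identity $(A-GX)^\tr Y + Y(A-GX) = -\bigl(YGY + \nice{R}(\Xi)\bigr)$ and reading positive semidefiniteness of $Y$ off the integral representation, and then manufacture the stabilizing solution in (c) as $\tilde X := X - \Xi$, with uniqueness from the standard Sylvester spectral-disjointness argument. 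Both routes are sound. Yours is more elementary and self-contained (no appeal to the existence theory for Riccati equations beyond the stabilizing $X$ already posited for \eqref{intro:eq:ric}) and in fact shows that the hypothesis $\Xi \geq 0$ is not needed for (c) and (d), only $\nice{R}(\Xi)\geq 0$; the paper's route keeps (c) independent of the existence of $X$ and exposes the inheritance of stabilizability by $\tilde A$, which is a structural fact of independent interest.
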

\begin{proof} \ 
	\begin{itemize}
		\item[(a)]
			This is a straightforward computation which follows by inserting $\tilde{X} = X-\Xi$ and the formula for the residual of $\Xi$ into
\eqref{lrcf:eq:residual_equation}, see also \cite{Ben15,MehT88}.
		\item[(b)]
			The first part follows as in (a).
			If $\Xi \geq 0$ and $\tilde{X}$ is a stabilizing solution to \eqref{lrcf:eq:residual_equation},
			then $X = \Xi + \tilde{X} \geq 0$ and $A-GX = \tilde{A} - G\tilde{X}$ is stable, which makes $X$ the stabilizing solution to
\eqref{intro:eq:ric}.
		\item[(c), (d)]
			\change{The claims follow directly from (a) and \cite[Theorem 9.1.1]{LanR95}.}
	\end{itemize}
\end{proof}
Our algorithm will have the following form:
\begin{enumerate}
    \item Let $\Xi=0$. 
    \item Form the residual equation \eqref{lrcf:eq:residual_equation} for the approximation $\Xi$.
    \item Compute an approximation $\tilde{X}_1 \approx \tilde{X}$, where $\tilde{X}$ is the stabilizing solution of \eqref{lrcf:eq:residual_equation}.
    \item Accumulate $\Xi \leftarrow \Xi + \tilde{X}_1$, and go to Step 2.
\end{enumerate}

To complete the derivation, we need to specify Step 3 in a way that $\tilde{X}_1 \geq 0$ and $\nice{R}(\Xi + \tilde{X}_1) \geq 0$.
With these two conditions imposed, Theorem \ref{lrcf:tm:residual_equation} ensures that 
the residual equation in Step 2 always has a unique stabilizing solution and that the approximation $\Xi$ is kept positive semidefinite and 
monotonically increasing towards the stabilizing solution of \eqref{intro:eq:ric}.
The matrix $\tilde{X}_1$ fulfilling these conditions can be obtained by computing a 1-dimensional invariant subspace for the Hamiltonian matrix
associated with the residual equation, and plugging it into formula \eqref{lrcf:eq:ham-ric}.

More precisely, assume that 
$\nice{R}(\Xi) = \tilde{C}^\tr \tilde{C} \geq 0$, $G = BB^\tr$, and that $r, q \in \Complex^n$ satisfy
$$
	\mb{cc} 
		\tilde{A} & BB^\tr \\ 
		\tilde{C}^\tr \tilde{C} & -\tilde{A}^\tr 
	\me
	\mb{c} r \\ q \me
	= 
	\lam
	\mb{c} r \\ q \me,
$$
where $\lam \in \LHP$ is such that $-\lam$ is not an eigenvalue of $\tilde{A}$.
Equivalently,
\begin{align*}
	\tilde{A} r + BB^\tr q &= \lam r, \\
	\tilde{C}^\tr \tilde{C} r - \tilde{A}^\tr q &= \lam q.
\end{align*}

From the second equation we get $q = (\tilde{A}^\tr + \lam I)^{-1} \tilde{C}^\tr (\tilde{C}r)$. 
Let $$
\tilde{V}_1 = \sqrt{-2\real{\lam}}(\tilde{A}^\tr + \lam I)^{-1} \tilde{C}^\tr.
$$
Multiply the first equation by $q^\tr$ from the left, and the transpose of the second by $r$ from the right; then add the terms to obtain
$$
	q^\tr r = \frac{1}{2\real{\lam}} (q^\tr BB^\tr q + r^\tr \tilde{C}^\tr \tilde{C} r)
		= \frac{1}{2\real{\lam}} (\tilde{C}r)^\tr \left( I - \frac{1}{2\real{\lam}} (\tilde{V}_1^\tr B)(\tilde{V}_1^\tr B)^\tr \right)
(\tilde{C}r).
$$
Expression \eqref{lrcf:eq:ham-ric} has the form 
$\tilde{X}_1 = -q (q^\tr r)^{-1}q^\tr$. When $p=1$ and $\tilde{C}r \neq 0$, the terms containing $\tilde{C}r$ cancel out, and we get
\begin{equation}
    \label{lrcf:eq:update_formula}
    \left.
        \begin{array}{rcl}
            \tilde{V}_1 &=& \sqrt{-2\real{\lam}} \cdot (\tilde{A}^\tr + \lam I)^{-1} \tilde{C}^\tr, \\
            \tilde{Y}_1 &=& I - \frac{1}{2\real{\lam}} (\tilde{V}_1^\tr B)(\tilde{V}_1^\tr B)^\tr, \\
            \tilde{X}_1 &=& \tilde{V}_1 \tilde{Y}_1^{-1} \tilde{V}_1^\tr.
        \end{array}
    \right\}
\end{equation}

\change{The derivation above is valid when $\lam$ is an eigenvalue of the Hamiltonian matrix, 
similarly as in \cite[Theorem 7]{BujB14} which also studies residual Riccati equations related to invariant subspaces
of Hamiltonian matrices. 
Nevertheless, the expression \eqref{lrcf:eq:update_formula} is well-defined even when $\lam$ is not an eigenvalue of the Hamiltonian matrix, 
and for $p>1$ as well. The Hamiltonian argument here serves only as a motivation for introducing \eqref{lrcf:eq:update_formula},
and the following proposition shows that the desired properties of the updated matrix $\Xi + \tilde{X}_1$ still hold for any 
$\lam$ in the left half-plane which is not an eigenvalue of $-\tilde{A}$,
and for all $p$.}

\begin{proposition}
    \label{lrcf:prop:update_is_good}
    Let $\Xi \geq 0$ be such that $\nice{R}(\Xi)=\tilde{C}^\tr \tilde{C} \geq 0$, and let $\lam \in \LHP$ not be an eigenvalue of $-\tilde{A}$. 
    The following holds true for the update matrix $\tilde{X}_1$ as defined in \eqref{lrcf:eq:update_formula}:
    \begin{itemize}
        \item[(a)] $\tilde{X}_1 \geq 0$, i.e. $\Xi + \tilde{X}_1 \geq 0$.
        \item[(b)] $\nice{R}(\Xi + \tilde{X}_1) = \hat{C}^\tr \hat{C} \geq 0$, where 
            $\hat{C}^\tr = \tilde{C}^\tr + \sqrt{-2\real{\lam}} \cdot \tilde{V}_1 \tilde{Y}_1^{-1}$.
    \end{itemize}
\end{proposition}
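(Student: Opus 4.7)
The plan is to handle the two parts separately but with the same underlying computational idea: exploit the defining identity $(\tilde{A}^\tr + \lam I)\tilde{V}_1 = \sqrt{-2\real{\lam}}\,\tilde{C}^\tr$ coming from the first line of \eqref{lrcf:eq:update_formula}, together with the algebraic structure of $\tilde{Y}_1$, to rewrite every quantity on the right-hand side of the Riccati residual as a quadratic form in $\tilde{V}_1$ and $\tilde{C}$.

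For part (a), the key observation is sign-related. Since $\lam\in\LHP$, the scalar $-2\real{\lam}$ is strictly positive, so $\tilde{Y}_1 = I + \frac{1}{-2\real{\lam}}(\tilde{V}_1^\tr B)(\tilde{V}_1^\tr B)^\tr \succeq I$. Hence $\tilde{Y}_1$ is Hermitian positive definite, its inverse is Hermitian positive definite, and therefore $\tilde{X}_1 = \tilde{V}_1\tilde{Y}_1^{-1}\tilde{V}_1^\tr \geq 0$. Combined with $\Xi\geq0$, this gives $\Xi+\tilde{X}_1\geq 0$.

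For part (b), I would first apply Theorem~\ref{lrcf:tm:residual_equation}(a) in reverse to reduce the computation: since $\nice{R}(\Xi)=\tilde{C}^\tr\tilde{C}$ and $\tilde{A}=A-G\Xi$, a direct expansion gives
\begin{equation*}
\nice{R}(\Xi+\tilde{X}_1) \;=\; \tilde{C}^\tr\tilde{C} + \tilde{A}^\tr\tilde{X}_1 + \tilde{X}_1\tilde{A} - \tilde{X}_1 G \tilde{X}_1.
\end{equation*}
Next I would substitute $\tilde{X}_1=\tilde{V}_1\tilde{Y}_1^{-1}\tilde{V}_1^\tr$ and use the defining identity to replace $\tilde{A}^\tr\tilde{V}_1 = -\lam\tilde{V}_1 + \sqrt{-2\real{\lam}}\,\tilde{C}^\tr$ (and its adjoint). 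The pure-$\tilde{X}_1$ contribution collapses to $-2\real{\lam}\,\tilde{V}_1\tilde{Y}_1^{-1}\tilde{V}_1^\tr$ plus the mixed terms $\sqrt{-2\real{\lam}}\,(\tilde{C}^\tr\tilde{Y}_1^{-1}\tilde{V}_1^\tr + \tilde{V}_1\tilde{Y}_1^{-1}\tilde{C})$. For the quadratic term $\tilde{X}_1 G \tilde{X}_1$ I would use $\tilde{V}_1^\tr BB^\tr\tilde{V}_1 = 2\real{\lam}(I-\tilde{Y}_1)$, which is just a rearrangement of the definition of $\tilde{Y}_1$; since $\tilde{Y}_1$ and $\tilde{Y}_1^{-1}$ commute, this yields $\tilde{X}_1 G \tilde{X}_1 = 2\real{\lam}\,\tilde{V}_1(\tilde{Y}_1^{-2} - \tilde{Y}_1^{-1})\tilde{V}_1^\tr$. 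The $\tilde{Y}_1^{-1}$ contributions then cancel cleanly, leaving
\begin{equation*}
\nice{R}(\Xi+\tilde{X}_1) = \tilde{C}^\tr\tilde{C} + \sqrt{-2\real{\lam}}\,\bigl(\tilde{C}^\tr\tilde{Y}_1^{-1}\tilde{V}_1^\tr + \tilde{V}_1\tilde{Y}_1^{-1}\tilde{C}\bigr) - 2\real{\lam}\,\tilde{V}_1\tilde{Y}_1^{-2}\tilde{V}_1^\tr.
\end{equation*}
Finally, I would recognize the right-hand side as the expansion of $(\tilde{C}^\tr + \sqrt{-2\real{\lam}}\,\tilde{V}_1\tilde{Y}_1^{-1})(\tilde{C} + \sqrt{-2\real{\lam}}\,\tilde{Y}_1^{-1}\tilde{V}_1^\tr) = \hat{C}^\tr\hat{C}$, using that $\tilde{Y}_1^{-1}$ is Hermitian. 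Positive semidefiniteness then follows immediately from the factored form.

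The only potential obstacle is a technical one: the term $\tilde{X}_1 G \tilde{X}_1$ looks at first glance to be of higher order in $\tilde{Y}_1^{-1}$ than the other contributions. Tracking its exact sign and order, and making sure the commutativity between $\tilde{Y}_1^{-1}$ and $(I - \tilde{Y}_1)$ is invoked correctly, is the step most prone to bookkeeping errors; everything else is a mechanical substitution of the defining identity for $\tilde{V}_1$.
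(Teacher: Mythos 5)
Your argument is correct and is essentially the same as the paper's proof: part (a) is the same positivity observation about $\tilde{Y}_1$, and part (b) uses the identical substitutions $\tilde{A}^\tr\tilde{V}_1 = \sqrt{-2\real{\lam}}\,\tilde{C}^\tr - \lam\tilde{V}_1$ and $(\tilde{V}_1^\tr B)(\tilde{V}_1^\tr B)^\tr = 2\real{\lam}(I-\tilde{Y}_1)$, leading to the same cancellation and factorization. The bookkeeping worry you flag about $\tilde{X}_1 G \tilde{X}_1$ resolves exactly as you describe, so there is no gap.
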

\begin{proof} \
    \begin{itemize}
        \item[(a)]
            Positive definiteness of $\tilde{Y}_1$ (and then the semi-definiteness of $\tilde{X}_1$ as well) follows directly from $\real{\lam} < 0$.
        \item[(b)]
            Note that $\tilde{A}^\tr \tilde{V}_1 = \sqrt{-2\real{\lam}} \cdot \tilde{C}^\tr  - \lam \tilde{V}_1$, and
            $ (\tilde{V}_1^\tr B)(\tilde{V}_1^\tr B)^\tr = 2\real{\lam} I - 2\real{\lam} \tilde{Y}_1$.
            We use these expressions to obtain:
            \begin{align*}
                \nice{R}(\Xi + \tilde{X}_1)
                    &= \tilde{A}^\tr \tilde{X}_1 + \tilde{X}_1 \tilde{A} + \nice{R}(\Xi) - \tilde{X}_1 BB^\tr \tilde{X}_1 \\
                    &= (\tilde{A}^\tr \tilde{V}_1) \tilde{Y}_1^{-1} \tilde{V}_1^\tr + \tilde{V}_1 \tilde{Y}_1^{-1} (\tilde{A}^\tr \tilde{V}_1)^\tr +
\tilde{C}^\tr \tilde{C} - \tilde{V}_1 \tilde{Y}_1^{-1} (\tilde{V}_1^\tr B) (\tilde{V}_1^\tr B)^\tr  \tilde{Y}_1^{-1} \tilde{V}_1^\tr \\
                    &= \sqrt{-2\real{\lam}} \cdot \tilde{C}^\tr \tilde{Y}_1^{-1} \tilde{V}_1^\tr + \sqrt{-2\real{\lam}} \cdot \tilde{V}_1 \tilde{Y}_1^{-1}
\tilde{C} + \tilde{C}^\tr \tilde{C} \\
                    & \quad\quad - 2\real{\lam} \tilde{V}_1 \tilde{Y}_1^{-1} \tilde{Y}_1^{-1} \tilde{V}_1^\tr \\
                    &= (\tilde{C}^\tr + \sqrt{-2\real{\lam}}\cdot \tilde{V}_1 \tilde{Y}_1^{-1}) \cdot (\tilde{C}^\tr + \sqrt{-2\real{\lam}}\cdot \tilde{V}_1
\tilde{Y}_1^{-1})^\tr.
            \end{align*}
    \end{itemize}
\end{proof}

\change{%
We are now ready to state the new RADI algorithm.
Starting with the initial approximation 
$X_0=0$ and the residual $\nice{R}(X_0)=C^\tr C$, we continue by selecting a shift $\sigma_k\in\LHP$ and computing the approximation
$X_k = Z_k Y_k^{-1} Z_k^\tr$ with the residual $\nice{R}(X_k) = R_k R_k^\tr$, for $k=1, 2, \ldots$. 
The transition from $X_{k-1}$ to $X_k = X_{k-1} + V_k \tilde{Y}_k^{-1} V_k^\tr$ 
is computed via \eqref{lrcf:eq:update_formula}, adapted to approximate the solution of the residual equation
with $\Xi = X_{k-1}$, i.e.~ with $\tilde{A} = A - BB^\tr X_{k-1}$ and $\tilde{C}^\tr = R_{k-1}$.
Proposition \ref{lrcf:prop:update_is_good} provides a very efficient update formula for the low-rank factor $R_k$ of the residual.
The whole procedure reduces to the following:}

\begin{equation}	
	\label{lrcf:eq:qadi-lr}
        \left.
		\begin{array}{rcl}
			R_0 &=& C^\tr, \\
			V_k &=& \sqrt{-2\real{\sigma_k}} \cdot (A^\tr - X_{k-1} BB^\tr + \sigma_k I)^{-1} R_{k-1}, \\
		    \tilde{Y}_k &=& I - \frac{1}{2\real{\sigma_k}} (V_k^\tr B) (V_k^\tr B)^\tr; \quad Y_k = \mb{cc} Y_{k-1} & \\ & \tilde{Y}_k \me, \\
			R_k &=& R_{k-1} + \sqrt{-2\real{\sigma_k}} \cdot V_k \tilde{Y}_k^{-1}, \\
			Z_k &=& \mb{cc} Z_{k-1} & V_k \me.
		\end{array}
        \right\}
        \text{RADI iteration}
\end{equation}
Note that any positive semi-definite $X_0$ can be used as an initial
approximation, as long as its residual is positive semi-definite as well, and
its low-rank Cholesky factorization can be computed.  From the derivation of the
RADI algorithm we have that
$$
    X_k = \sum_{i=1}^k V_i \tilde{Y}_i^{-1} V_i^\tr;
$$
in formulation \eqref{lrcf:eq:qadi-lr} of the method we have collected $V_1, \ldots, V_k$ into the matrix $Z_k$, and 
$\tilde{Y}_1, \ldots, \tilde{Y}_k$ into the block-diagonal matrix $Y_k$.

When $p=1$ and all the shifts are chosen as eigenvalues of the Hamiltonian matrix associated with the initial Riccati equation
\eqref{intro:eq:ric}, the update described in Proposition~\ref{lrcf:prop:update_is_good} reduces to 
\cite[Theorem 5]{BujB14}. Thus in that case, 
the RADI algorithm reduces to the invariant subspace approach \eqref{lrcf:eq:ham-ric}.

Furthermore, iteration \eqref{lrcf:eq:qadi-lr} clearly reduces to the low-rank Lyapunov ADI method \eqref{lrcf:eq:lyap-adi-lr} when $B=0$; in that case $Y_k =
I$. The relation to the original qADI iteration \eqref{lrcf:eq:qadi} is not clear unless $p=1$ and the shifts are chosen
as eigenvalues of $\nice{H}$, in which case both of these methods coincide with the invariant subspace approach.
We discuss this further \change{in the following section.} 


\section{Equivalences with other Riccati methods}
\label{equiv}

In this section we prove that all Riccati solvers introduced in Section \ref{lrcf} in fact compute exactly the same iterations,
which we will refer to as the Riccati ADI iterations in the remaining text.
\change{This result is collected in Theorem \ref{equiv:tm}; 
we begin with a simple technical lemma that provides different representations of the residual factor.}

\begin{lemma}
    \label{lrcf:tm:residual}
    Let
    \begin{align*}
        R_k^{(1)} &= \frac{1}{\sqrt{-2\real{\sigma_k}}} \cdot (A^\tr - X_k G - \conj{\sigma_k}I ) V_k, \\
        R_k^{(2)} &= \frac{1}{\sqrt{-2\real{\sigma_{k+1}}}} \cdot (A^\tr - X_k G + \sigma_{k+1}I ) V_{k+1}.
    \end{align*}
    Then $R_k^{(1)} = R_k^{(2)} = R_k$.
\end{lemma}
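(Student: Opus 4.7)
The plan is to verify the two claimed identities $R_k^{(2)} = R_k$ and $R_k^{(1)} = R_k$ separately, by direct algebraic manipulation using the defining recursions of the RADI iteration in \eqref{lrcf:eq:qadi-lr}. No fundamentally new idea is required beyond these recursions and the definition of $\tilde{Y}_k$.

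The identity $R_k^{(2)} = R_k$ is essentially immediate. By the definition of $V_{k+1}$ in \eqref{lrcf:eq:qadi-lr} one has
$$
    V_{k+1} = \sqrt{-2\real{\sigma_{k+1}}} \cdot (A^\tr - X_k G + \sigma_{k+1} I)^{-1} R_k,
$$
so left-multiplying by $(A^\tr - X_k G + \sigma_{k+1} I)$ and dividing by $\sqrt{-2\real{\sigma_{k+1}}}$ recovers exactly $R_k$.

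For the nontrivial identity $R_k^{(1)} = R_k$, the plan is to start from the identity $(A^\tr - X_{k-1}G + \sigma_k I)\, V_k = \sqrt{-2\real{\sigma_k}}\, R_{k-1}$, which follows from the definition of $V_k$, and then rewrite $A^\tr - X_k G$ using the rank-$p$ update $X_k = X_{k-1} + V_k \tilde{Y}_k^{-1} V_k^\tr$ obtained by telescoping the recursion for $Z_k, Y_k$. This produces an extra term $V_k \tilde{Y}_k^{-1} V_k^\tr G V_k = V_k \tilde{Y}_k^{-1} (V_k^\tr B)(V_k^\tr B)^\tr$, which must be simplified. The key algebraic step is to solve the definition of $\tilde{Y}_k$ for $(V_k^\tr B)(V_k^\tr B)^\tr$, yielding
$$
    (V_k^\tr B)(V_k^\tr B)^\tr = 2\real{\sigma_k}\,(I - \tilde{Y}_k),
$$
so that $\tilde{Y}_k^{-1}(V_k^\tr B)(V_k^\tr B)^\tr = 2\real{\sigma_k}\,(\tilde{Y}_k^{-1} - I)$. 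Substituting this back and using the cancellation $-\sigma_k + 2\real{\sigma_k} = \conj{\sigma_k}$, one finds that the coefficient of $V_k$ on the right-hand side becomes precisely $\conj{\sigma_k}$, which is then absorbed by the $-\conj{\sigma_k} I$ shift appearing in the definition of $R_k^{(1)}$. What remains is $\sqrt{-2\real{\sigma_k}}\, R_{k-1} - 2\real{\sigma_k}\, V_k \tilde{Y}_k^{-1}$; dividing by $\sqrt{-2\real{\sigma_k}}$ and using $-2\real{\sigma_k} / \sqrt{-2\real{\sigma_k}} = \sqrt{-2\real{\sigma_k}}$ produces $R_{k-1} + \sqrt{-2\real{\sigma_k}}\, V_k \tilde{Y}_k^{-1}$, which is $R_k$ by \eqref{lrcf:eq:qadi-lr}.

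The computation is entirely mechanical; the only real point of care is the bookkeeping of $\sigma_k$ versus $\conj{\sigma_k}$, and the main obstacle (if one can call it that) is spotting that the $\conj{\sigma_k}$ shift in $R_k^{(1)}$ is exactly the byproduct of combining the $\sigma_k$-shift in the defining equation for $V_k$ with the $2\real{\sigma_k}$ factor hidden in the normalization of $\tilde{Y}_k$. Once this cancellation is identified, both identities follow in a few lines.
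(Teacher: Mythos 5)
Your proposal is correct and follows essentially the same route as the paper: both start from the defining relation $(A^\tr - X_{k-1}G + \sigma_k I)V_k = \sqrt{-2\real{\sigma_k}}\,R_{k-1}$, substitute the rank-$p$ update $X_k = X_{k-1} + V_k\tilde{Y}_k^{-1}V_k^\tr$, and use the identity $V_k^\tr G V_k = 2\real{\sigma_k}(I-\tilde{Y}_k)$ together with $\sigma_k - 2\real{\sigma_k} = -\conj{\sigma_k}$ to reassemble $R_k^{(1)}$.
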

\begin{proof}
    From the definition of the RADI iteration \eqref{lrcf:eq:qadi-lr} it is obvious that $R_k^{(2)} = R_k$.
    Using $X_k = X_{k-1} + V_k \tilde{Y}_k^{-1} V_k^\tr$, 
    and $V_k^\tr G V_k = 2\real{\sigma_k} I - 2\real{\sigma_k} \tilde{Y}_k$, we have
    \begin{align*}
        R_k 
            &= R_{k-1} + \sqrt{-2\real{\sigma_k}} V_k \tilde{Y}_k^{-1} \\
            &= \frac{1}{\sqrt{-2\real{\sigma_{k}}}} \cdot (A^\tr - X_{k-1} G + \sigma_{k}I ) V_{k} + \sqrt{-2\real{\sigma_k}} V_k \tilde{Y}_k^{-1} \\
            &= \frac{1}{\sqrt{-2\real{\sigma_{k}}}} \cdot (A^\tr - X_{k} G - \conj{\sigma_{k}}I ) V_{k} + \frac{1}{\sqrt{-2\real{\sigma_{k}}}} V_k
\tilde{Y}_k^{-1} V_k^\tr G V_k \\
            & \quad\quad - \sqrt{-2\real{\sigma_k}} V_k + \sqrt{-2\real{\sigma_k}} V_k \tilde{Y}_k^{-1} \\
            &= R^{(1)}_{k}.
    \end{align*}
\end{proof}

\begin{theorem}
	\label{equiv:tm}
    If the initial approximation in all algorithms is zero, and the same shifts are used, then for all $k$,
    $$
        X_k = \qX = \cX.
    $$
    If $\rank C = 1$ and the shifts are equal to distinct eigenvalues of $\nice{H}$, then for all $k$,
    $$
        X_k = \qX = \cX = \hX.
    $$
\end{theorem}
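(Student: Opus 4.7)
The plan is to prove both statements by induction on $k$, with the trivial base case $X_0 = \qX[0] = \cX[0] = 0$. The second statement follows immediately from the first together with the fact (already observed following Proposition \ref{lrcf:prop:update_is_good}) that when $p=1$ and the shifts are Hamiltonian eigenvalues the RADI update reduces to the invariant subspace formula \eqref{lrcf:eq:ham-ric}, whose equality with $\qX$ is \cite[Theorem 5]{BujB14}. So the only real work is in proving the first statement for arbitrary shifts.

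For the inductive step, set $\Xi := X_{k-1} = \qX[k-1] = \cX[k-1]$ and write $M := A + \conj{\sigma_k}I - G\Xi$. To show $X_k = \qX$, I would first eliminate the half-step in \eqref{lrcf:eq:qadi}: substituting the identity $-Q = A^\tr\Xi + \Xi A - \Xi G\Xi - \nice{R}(\Xi)$ into the first qADI equation yields $\qX[k-1+1/2] = \Xi - \nice{R}(\Xi) M^{-1}$. Feeding this into the second qADI equation and simplifying with the same residual identity produces the closed form
\[
\qX - \Xi = -2\real{\sigma_k}\,(I + PG)^{-1}P, \qquad P := M^{-\tr}\nice{R}(\Xi) M^{-1}.
\]
On the RADI side, $V_k = \sqrt{-2\real{\sigma_k}}\,M^{-\tr}R_{k-1}$ and $\nice{R}(\Xi) = R_{k-1}R_{k-1}^\tr$, so a Sherman--Morrison--Woodbury rearrangement of $V_k\tilde{Y}_k^{-1}V_k^\tr$ gives $-2\real{\sigma_k}\,P(I+GP)^{-1}$. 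The two expressions agree because $(I+PG)P = P(I+GP)$ implies $(I+PG)^{-1}P = P(I+GP)^{-1}$.

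For $X_k = \cX$, writing $(\nice{H} - \sigma_k I)\smb M_k \\ N_k \sme = (\nice{H} + \conj{\sigma_k}I)\smb I \\ -\Xi \sme$ block-wise produces two equations in $M_k$ and $N_k$ that must be consistent with the target $N_k = -X_k M_k$. Substituting this target, the upper block determines $M_k = (A - GX_k - \sigma_k I)^{-1}(A - G\Xi + \conj{\sigma_k}I)$, and the lower block — once $Q$ is eliminated via the residual identity and the formula for $\Delta := X_k - \Xi$ from the previous paragraph is inserted — reduces to a tautology. The main obstacle is the bookkeeping in this last verification: both off-diagonal blocks of $\nice{H}$ couple $M_k$ and $N_k$, and separating the contribution of $\Delta$ from that of $\Xi$ requires a coordinated use of the Woodbury identity together with the factorization $M^\tr + \nice{R}(\Xi)M^{-1}G = M^\tr(I+PG)$ that underlies the qADI step.
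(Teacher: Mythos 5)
Your proposal is correct, and for the core equivalence $X_k = \qX$ it tracks the paper's argument closely: both hinge on the half-step identity $\qX[k-1/2] = X_{k-1} - \nice{R}(X_{k-1})M^{-1}$ obtained by subtracting the first qADI equation from the residual identity, and on the algebraic fact $V_k^\tr G V_k = 2\real{\sigma_k}(I-\tilde{Y}_k)$ (which is exactly the SMW/push-through move you use). The difference is only presentational --- you extract matching closed forms $\Delta = -2\real{\sigma_k}(I+PG)^{-1}P$ on both sides, whereas the paper substitutes $X_k$ into the second qADI equation and verifies it reduces to an identity. The reduction of the second claim to \cite[Theorem~5]{BujB14} is the same as in the paper.

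Where you diverge, and slightly over-engineer, is the Cayley step. The paper never needs the closed form for $\Delta$ here: it defines $\cX[k-1/2] := -N_{k-1/2}M_{k-1/2}^{-1}$ directly from the intermediate vector $(\nice{H}+\conj{\sigma_k}I)\smb I \\ -\cX[k-1] \sme$, observes that this object literally satisfies the half-step qADI equation, and then a one-line combination of the two block rows of $(\nice{H}-\sigma_k I)\smb M_k \\ N_k \sme = \smb M_{k-1/2}\\N_{k-1/2}\sme$ reproduces the full-step qADI equation for $\cX$. In other words, the Cayley recursion \emph{is} the qADI recursion written in homogeneous coordinates, and no Woodbury identity or factorization $M^\tr + \nice{R}(\Xi)M^{-1}G = M^\tr(I+PG)$ is needed. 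Your route of plugging the ansatz $N_k = -X_kM_k$ into the block equations does work (by uniqueness of the solution of $(\nice{H}-\sigma_k I)\smb M_k\\N_k\sme = \smb M_{k-1/2}\\N_{k-1/2}\sme$), but if you chase the algebra you will find the ``tautology'' you seek is precisely the second qADI equation $(A^\tr+\sigma_k I - \qX[k-1/2]G)X_k = -Q - \qX[k-1/2](A-\sigma_k I)$, which the first part already supplied --- so the ``main obstacle'' you anticipate dissolves. Proving $\cX = \qX$ instead of $\cX = X_k$, as the paper does, is precisely what lets one sidestep that bookkeeping.
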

\begin{proof}
	We first use induction to show that $X_k = \qX$, for all $k$. \\
	\

	Assume that $X_{k-1} = \qX[k-1]$.
	We need to show that $X_k = X_{k-1} + V_k \YY_k^{-1} V_k^\tr$ satisfies the defining equality \eqref{lrcf:eq:qadi} of the {qADI} iteration, i.e. that
	\begin{equation}
		\label{lrcf:eq:thm2:temp1}
		(A^\tr + \sigma_{k} I - \qX[k-1/2] G) (X_{k-1} + V_k \YY_k^{-1} V_k^\tr) = -Q - \qX[k-1/2] (A - \sigma_{k} I),
	\end{equation}
	where
	\begin{equation}
		\label{lrcf:eq:thm2:temp2}
		\qX[k-1/2] (A+\conj{\sigma_{k}} I - G X_{k-1}) = -Q - (A^\tr - \conj{\sigma_{k}} I) X_{k-1}.
	\end{equation}
	First, note that \eqref{lrcf:eq:thm2:temp2} can be rewritten as
	$$
		A^\tr X_{k-1} + \qX[k-1/2] A + Q - \qX[k-1/2] G X_{k-1} + \conj{\sigma_k}(\qX[k-1/2] - X_{k-1}) = 0.
	$$
	Subtracting this from the expression for the Riccati residual,
	$$
		A^\tr X_{k-1} + X_{k-1} A + Q - X_{k-1} G X_{k-1} = \nice{R}(X_{k-1}),
	$$
	we obtain
	\begin{equation}
		\label{lrcf:eq:thm2:temp3}
		\qX[k-1/2] - X_{k-1} = -\nice{R}(X_{k-1}) \cdot (A - GX_{k-1} + \conj{\sigma_k}I)^{-1}.	
	\end{equation}
	Equation \eqref{lrcf:eq:thm2:temp1} can be reorganized as
	\begin{align*}
		(A^\tr &+ \sigma_{k} I ) (X_{k-1} + V_k \YY_k^{-1} V_k^\tr) - \qX[k-1/2] G V_k \YY_k^{-1} V_k^\tr \\
			&= -Q - \qX[k-1/2] (A +\conj{\sigma_{k}} I - G X_{k-1} ) +  2\real{\sigma_{k}} \qX[k-1/2].
	\end{align*}
	Replace the second term on the right-hand side with the right-hand side of \eqref{lrcf:eq:thm2:temp2}.
	Thus, it remains to prove
	$$
		(A^\tr + \sigma_{k} I ) (X_{k-1} + V_k \YY_k^{-1} V_k^\tr) - \qX[k-1/2] G V_k \YY_k^{-1} V_k^\tr
			= (A^\tr - \conj{\sigma_{k}} I) X_{k-1} +  2\real{\sigma_{k}} \qX[k-1/2],
	$$
	or after some rearranging, and by using \eqref{lrcf:eq:thm2:temp3},
	\begin{align}
		(A^\tr &- X_{k-1}G + \sigma_k I)V_k \YY_k^{-1} V_k^\tr \nonumber \\
			&= (\qX[k-1/2] - X_{k-1})  \cdot (2\real{\sigma_k}I + G V_k \YY_k^{-1} V_k^\tr) \nonumber \\ 
			&= -\nice{R}(X_{k-1}) \cdot (A - GX_{k-1} + \conj{\sigma_k}I)^{-1} \cdot (2\real{\sigma_k}I + G V_k \YY_k^{-1} V_k^\tr). \label{lrcf:eq:thm2:temp4}
	\end{align}
	Next we use the expression $\nice{R}(X_{k-1}) = R_{k-1}^{(2)}(R_{k-1}^{(2)})^\tr$ of Lemma \ref{lrcf:tm:residual}.
	The right-hand side of \eqref{lrcf:eq:thm2:temp4} is, thus, equal to
	$$
		\frac{1}{2\real{\sigma_{k}}} \cdot (A^\tr - X_{k-1} G + \sigma_{k}I ) V_{k} V_k^\tr \cdot
		(2\real{\sigma_k}I + G V_k \YY_k^{-1} V_k^\tr),
	$$
	which turns out to be precisely the same as the left-hand side of \eqref{lrcf:eq:thm2:temp4} once we use the identity
	$$
		V_k^\tr G V_k = 2\real{\sigma_k}I - 2\real{\sigma_k} \YY_k.
	$$
	This completes the proof of $X_k = \qX$.
	\\
	\

	Next, we use induction once again to show $\cX = \qX$, for all $k$.
	For $k=0$, the claim is trivial; assume that $\cX[k-1] = \qX[k-1]$ for some $k \geq 1$.
	To show that $\cX = \qX$, let us first multiply \eqref{lrcf:eq:cay} by $\nice{H}-\sigma_k I$ from the left:
	\begin{equation}
		\label{eq:CHSIrec1}
		\mb{cc} A - \sigma_k I & G \\ Q & -A^\tr - \sigma_k I \me
			\mb{c} M_k \\ N_k \me
		= 
		\mb{cc} A + \conj{\sigma_k} I & G \\ Q & -A^\tr + \conj{\sigma_k} I \me
			\mb{c} I \\ -\cX[k-1] \me
		=:
		\mb{c} M_{k-1/2} \\ N_{k-1/2} \me,
	\end{equation}
	and suggestively introduce $\cX[k-1/2] := -N_{k-1/2} M_{k-1/2}^{-1}$. We thus have
	\begin{eqnarray}
		A + \conj{\sigma_k} I - G \cX[k-1]         & = & M_{k-1/2}, \\
		Q - (-A^\tr + \conj{\sigma_k} I) \cX[k-1] & = & N_{k-1/2},
	\end{eqnarray}
	and
	$$
		\cX[k-1/2] (A + \conj{\sigma_k} I - G \cX[k-1]) 
			= -N_{k-1/2} M_{k-1/2}^{-1} M_{k-1/2} 
			= -Q - (A^\tr - \conj{\sigma_k} I) \cX[k-1].
	$$
	This is the same relation as the one defining $\qX[k-1/2]$, and thus $\cX[k-1/2] = \qX[k-1/2]$.
	Next, equating the leftmost and the rightmost matrix in \eqref{eq:CHSIrec1}, it follows that
	\begin{eqnarray}
		(A - \sigma_k I) M_k + GN_k        & = & M_{k-1/2}, \label{eq:CHSIrec2} \\
		Q M_k + (-A^\tr - \sigma_k I) N_k  & = & N_{k-1/2}. \label{eq:CHSIrec3}
	\end{eqnarray}
	Multiply \eqref{eq:CHSIrec3} from the right by $M_k^{-1}$ to obtain
	\begin{equation}
		\label{eq:CHSIrec4}
		(A^\tr + \sigma_k I) \cX = -Q + N_{k-1/2}M_k^{-1},
	\end{equation}
	and multiply \eqref{eq:CHSIrec2} from the left by $\cX[k-1/2]$ and from the right by $M_k^{-1}$ to get
	\begin{equation}
		\label{eq:CHSIrec5}
		-\cX[k-1/2]G\cX = -\cX[k-1/2] (A-\sigma_k I) - N_{k-1/2}M_k^{-1}.
	\end{equation}
	Adding \eqref{eq:CHSIrec4} and \eqref{eq:CHSIrec5} yields
	$$
		(A^\tr + \sigma_k I - \cX[k-1/2]G) \cX = -Q - \cX[k-1/2] (A-\sigma_k I),
	$$
	which is the same as the defining equation for $\qX$. Thus $\cX = \qX$, so both the Cayley subspace iteration and the qADI iteration
	generate the same sequences.
	\\
	\

	In the case of $\rank{C}=1$ and shifts equal to the eigenvalues of $\nice{H}$, the equality $\hX = \qX$ is already shown in \cite{BujB14}.
	Equality among the iterates generated by the other methods is a special case of what we have proved above.
\end{proof}

\change{It is interesting to observe that \cite{LinSim13} also provides a low-rank variant of the Cayley subspace iteration algorithm: 
there, formulas for updating the factors of $\cX = \cZ (\cY)^{-1} (\cZ)^\tr$, where $\cZ \in \Complex^{n \times pk}$ and $\cY \in \Complex^{pk \times pk}$, are
given.
The contribution of \cite{MasOR14} was to show that the same formulas can be derived from a control-theory point of view.}
The main difference in comparison to our RADI variant of the low-rank Riccati ADI iterations 
\change{is that, in order to compute $\cZ$, one uses} the matrix $(A^\tr +{\sigma_k}I)^{-1}$, instead of $(A^\tr-X_{k-1}G+{\sigma_k}I)^{-1}$, when computing
$Z_k$. 
This way, the need for using the Sherman--Morrison--Woodbury formula is avoided.
\change{However, as a consequence, the matrix $\cY$ looses the block-diagonal structure, and 
its update formula becomes much more involved.}
Also, it is very difficult to derive a version of the algorithm that would use real arithmetic.
Another disadvantage is the computation of the residual: along with $\cZ$ and $\cY$, one needs to
maintain a QR-factorization of the matrix $[C^\tr \;\; A^\tr \cZ \;\; \cZ ]$, 
which adds significant computational complexity to the algorithm.
\\

Each of these different statements of the same Riccati ADI algorithm may contribute when studying theoretical properties of the iteration.
For example, directly from our definition \eqref{lrcf:eq:qadi-lr} of the RADI iteration it is obvious that
$$
	0 \leq X_1 \leq X_2 \leq \ldots \leq X_k \leq \ldots \leq X.
$$
Also, the fact that the residual matrix $\nice{R}(X_k)$
is low-rank and its explicit factorization follows naturally from our approach.
On the other hand, approaching the iteration from the control theory point of view as in \cite{MasOR14} is more suitable for
proving that the non-Blaschke condition for the shifts,
$$
	\sum_{k=1}^{\infty} \frac{\real{\sigma_k}}{1 + |\sigma_k|^2} = -\infty,
$$
is sufficient for achieving the convergence when $A$ is stable, i.e.
$$
	\lim_{k \to \infty} X_k = X.
$$

We conclude this section by noting a relation between the Riccati {ADI} iteration and the rational Krylov subspace method \cite{SimSM13a}.
It is easy to see that the {RADI} iteration also uses the rational Krylov subspaces as the basis for approximation.
\change{This fact also follows from the low-rank formulation for $\cX$ as given in \cite{LinSim13}, 
so we only state it here without proof.}

\begin{proposition}
	For a matrix $M$, (block-)vector $v$ and a tuple $\overrightarrow{\sigma_k} = (\sigma_1, \ldots, \sigma_k) \in \LHP^k$, let
	$$
		\nice{K}(M, v, \overrightarrow{\sigma_k}) = \spann{(M+\sigma_1 I)^{-1}v, (M+\sigma_2 I)^{-1}v, \ldots, (M+\sigma_k I)^{-1}v}
	$$
	denote the rational Krylov subspace generated by $M$ and the initial vector $v$.
	Then the columns of $X_k$ belong to $\nice{K}(A^\tr, C^\tr, \overrightarrow{\sigma_k})$.
\end{proposition}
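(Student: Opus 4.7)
I would prove the proposition by strong induction on $k$, with the (stronger) inductive hypothesis that the columns of each of $V_1,\dots,V_k$ lie in $\nice{K}(A^\tr, C^\tr, \overrightarrow{\sigma_k})$; the statement for $X_k$ then follows immediately since $X_k=\sum_{i=1}^{k}V_i\tilde Y_i^{-1}V_i^\tr$ has column space contained in $\spann{V_1,\ldots,V_k}$.

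\textbf{Base case.} For $k=1$, the RADI iteration starts with $R_0=C^\tr$, so
\[
V_1=\sqrt{-2\real{\sigma_1}}\,(A^\tr+\sigma_1 I)^{-1}C^\tr,
\]
whose columns lie in $\nice{K}(A^\tr, C^\tr, (\sigma_1))$ by definition.

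\textbf{Inductive step.} Assume the claim holds through index $k-1$. The only new object at step $k$ is $V_k$, defined by $V_k=\sqrt{-2\real{\sigma_k}}\,(A^\tr-X_{k-1}BB^\tr+\sigma_k I)^{-1}R_{k-1}$. Applying the Sherman--Morrison--Woodbury identity with $K_{k-1}=X_{k-1}B$ (exactly as in the implementation discussion) expresses $V_k$ column-by-column as a linear combination of columns of $(A^\tr+\sigma_k I)^{-1}R_{k-1}$ and $(A^\tr+\sigma_k I)^{-1}K_{k-1}$. Using the recursions $R_{j}=R_{j-1}+\sqrt{-2\real{\sigma_j}}\,V_j\tilde Y_j^{-1}$ and $K_{k-1}=\sum_{i=1}^{k-1}V_i\tilde Y_i^{-1}V_i^\tr B$, the columns of $R_{k-1}$ and $K_{k-1}$ lie in $\spann{C^\tr}+\spann{V_1,\ldots,V_{k-1}}$, which by the inductive hypothesis is contained in $\spann{C^\tr}+\nice{K}(A^\tr,C^\tr,\overrightarrow{\sigma_{k-1}})$. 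It remains to check that applying $(A^\tr+\sigma_k I)^{-1}$ to anything in this set lands in $\nice{K}(A^\tr,C^\tr,\overrightarrow{\sigma_k})$: for the $C^\tr$ summand this is immediate, while for a vector $(A^\tr+\sigma_j I)^{-1}C^\tr$ with $j<k$ the partial-fraction identity
\[
(A^\tr+\sigma_k I)^{-1}(A^\tr+\sigma_j I)^{-1}=\frac{1}{\sigma_j-\sigma_k}\bigl[(A^\tr+\sigma_k I)^{-1}-(A^\tr+\sigma_j I)^{-1}\bigr]
\]
writes the result as a combination of two basis vectors of $\nice{K}(A^\tr,C^\tr,\overrightarrow{\sigma_k})$.

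\textbf{Main obstacle.} The only subtlety I expect is the case $\sigma_k=\sigma_j$ for some $j<k$, where the partial-fraction identity breaks down and $(A^\tr+\sigma_k I)^{-2}C^\tr$ is not in the rational Krylov subspace as defined. Either the statement should tacitly assume distinct shifts, or one extends the definition to include derivative directions at repeated shifts; in the latter case the same partial-fraction argument, applied to $(A^\tr+\sigma_k I)^{-2}$ with a differentiation trick, shows that the extended Krylov subspace is likewise invariant under the operation above. Apart from this bookkeeping, the proof is a direct unrolling of the RADI recursions together with SMW and partial fractions.
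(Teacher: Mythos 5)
Your argument is correct and follows essentially the same route as the paper's: induction on $k$ together with the Sherman--Morrison--Woodbury formula, reducing to the observation that applying $(A^\tr+\sigma_k I)^{-1}$ to vectors in $\nice{K}(A^\tr,C^\tr,\overrightarrow{\sigma_{k-1}})$ lands inside $\nice{K}(A^\tr,C^\tr,\overrightarrow{\sigma_k})$. The one structural difference is that the paper first invokes Lemma~1 (the characterization $R_{k-1}=R_{k-1}^{(1)}$) to rewrite $V_k=\alpha_k V_{k-1}+\beta_k(A^\tr-X_{k-1}G+\sigma_k I)^{-1}V_{k-1}$ for suitable scalars $\alpha_k,\beta_k$, so that SMW is applied to a resolvent acting on $V_{k-1}$ alone; since $V_{k-1}$ already lies in the rational Krylov subspace by the inductive hypothesis, the $C^\tr$-summand never has to be tracked separately and the closure property $\nice{K}(\overrightarrow{\sigma_{k-1}})+(A^\tr+\sigma_k I)^{-1}\nice{K}(\overrightarrow{\sigma_{k-1}})\subseteq\nice{K}(\overrightarrow{\sigma_k})$ can be stated in one line. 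You instead work directly from the defining relation $V_k=\sqrt{-2\real{\sigma_k}}\,(A^\tr-X_{k-1}BB^\tr+\sigma_k I)^{-1}R_{k-1}$, apply SMW, and unroll the $R$- and $K$-recursions to place the relevant column spaces in $\spann{C^\tr}+\spann{V_1,\ldots,V_{k-1}}$ before applying the resolvent; this is a little more bookkeeping but reaches the same place, and has the virtue of being self-contained without the residual-factor lemma. Your remark about repeated shifts is a fair observation that the paper leaves implicit: with the literal definition of $\nice{K}$ (simple poles only, one resolvent per shift), the closure property requires $\sigma_k\notin\{\sigma_1,\ldots,\sigma_{k-1}\}$, and the paper's one-line containment carries exactly the same tacit assumption; either restrict to pairwise distinct shifts or adopt the usual convention that a repeated shift contributes higher-order pole directions to the rational Krylov space.
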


From the proposition we conclude the following: if $U_k$ contains a basis for the rational Krylov subspace $\nice{K}(A^\tr, C^\tr, \overrightarrow{\sigma_k})$,
then both the approximation $\rX$ of the Riccati solution obtained by the rational Krylov subspace method and the approximation $\qX$ obtained by 
a Riccati {ADI} iteration satisfy
$$
	\rX = U_k \rY U_k^\tr, \quad \qX = U_k \qY U_k^\tr,
$$
for some matrices $\rY, \qY \in \Complex^{pk \times pk}$. The columns of both $\rX$ and $\qX$ belong to the same subspace, and the only difference between
the methods is the choice of the linear combination of columns of $U_k$, i.e. the choice of the small matrix $Y_k$.
The rational Krylov subspace method \cite{SimSM13a} generates its $\rY$ by solving the projected Riccati equation, while 
the Riccati {ADI} methods do it via direct formulas such
as the one in \eqref{lrcf:eq:qadi-lr}.


\section{Implementation aspects of the RADI algorithm}
\label{impl}
There are several issues with the iteration \eqref{lrcf:eq:qadi-lr}, stated as is, that should be addressed when designing an efficient computational
routine: how to decide when the iterates $X_k$ have converged,
how to solve linear systems with matrices $A^\tr - X_{k-1}G + \sigma_k I$, and how to minimize the usage of complex arithmetic.
\change{In this section we also discuss the various shift selection strategies.}

\subsection{Computing the residual and the stopping criterion.}
Tracking the progress of the algorithm and deciding when the iterates have converged is 
\change{very simple, and can be computed cheaply thanks to the expression 
$
    \|\nice{R}(X_k)\| = \|R_k R_k^\tr\| = \|R_k^\tr R_k\|.
$
This is an advantage compared to the Cayley subspace iteration, where computing $\|\nice{R}(X_k)\|$ is more expensive because a low-rank factorization
along the lines of Proposition~\ref{lrcf:prop:update_is_good}~(b) is currently not known.}
The RADI iteration is stopped once the residual norm has decreased sufficiently relative to the initial residual norm $\|C C^\tr \|$
of the approximation $X_0=0$. 

\subsection{Solving linear systems in RADI}\label{ssec:linsysRADI}
During the iteration, one has to evaluate the expression $(A^\tr - X_{k-1}G + \sigma_k I)^{-1} R_{k-1}$.
Here the matrix $A$ is assumed to be sparse, while $X_{k-1}G = (X_{k-1}B) B^\tr$ is low-rank.
There are different options on how to solve this linear system; if one wants to use a direct sparse solver, the initial expression
can be adapted by using the Sherman--Morrison--Woodbury (SMW) formula \cite{GolV13}.
We introduce the \change{approximate feedback} matrix $K_{k} := X_k B$ and update it during the RADI iteration: 
$K_k = K_{k-1} + (V_k \tilde{Y}_k^{-1}) (V_k^\tr B)$.
Note that $V_k \tilde{Y}_k^{-1}$ also appears in the update of the residual factor, and that $V_k^\tr B$ appears in the computation of $\tilde{Y}_k$, so both
have to be computed only once.
The initial expression is rewritten as
\begin{align*}
    (A^\tr- K_{k-1}B^\tr + \sigma_k I)^{-1} R_{k-1}&=L_k+N_k(I_m-B^\tr N_k)^{-1}K_{k-1}^\tr L_k,\\
    [L_k,N_k]&=(A^\tr+\sigma_kI)^{-1}[R_{k-1},K_{k-1}].
\end{align*}
\change{Thus, in each RADI step one needs to solve a linear system with the coefficient matrix $A^\tr + \sigma_k I$ and $p+m$ right hand sides.
A very similar technique is used in the low-rank Newton ADI solver for the Riccati equation \cite{BenS10,BenLP08,Saa09,BenHW15}.
In the equivalent Cayley subspace iteration, linear systems defined by $A^\tr + \sigma_k I$ and only $p$ right hand sides have to be solved, which 
makes their solution less expensive than their counterparts in RADI.} 
\\
\

\begin{zAlgorithmFloat}
    \KwIn{matrices $A \in \Real^{n \times n}$, $B \in \Real^{n \times m}$, $C\in \Real^{p \times n}$.}
    \KwOut{approximation $X \approx Z Y^{-1} Z^\tr$ for the solution of $A^\tr X + XA + C^\tr C - X BB^\tr X =0$.}

    \vspace*{0.3cm}
    $R = C^\tr$; $K = 0$; $Y = [\;]$\;
    \While{$\|R^\tr R\| \geq tol \cdot \|C C^\tr\|$}
    {
        Obtain the next shift $\sigma$\;
        \eIf{first pass through the loop}
        {
            $Z = V = \sqrt{-2\real{\sigma}} \cdot (A^\tr + \sigma I)^{-1} R$\;
        }
        {
            
            $V = \sqrt{-2\real{\sigma}} \cdot (A^\tr - K B^\tr + \sigma I)^{-1} R$; \tcp{Use SMW if necessary}
            $Z = [Z \;\; V]$\;        
        }
        
        $\tilde{Y} = I - \frac{1}{2\real{\sigma}} \cdot (V^\tr B) (V^\tr B)^\tr$;
        $Y = \smb Y & \\ & \tilde{Y} \sme$;

        $R = R + \sqrt{-2 \real{\sigma}} \cdot (V \tilde{Y}^{-1})$\;
        $K = K + (V \tilde{Y}^{-1}) \cdot (V^\tr B)$;
    }    
    \caption{The RADI iteration using complex arithmetic.}\label{lrcf:alg:qadi-complex}
\end{zAlgorithmFloat}
\change{The RADI algorithm, implementing the techniques described above, is listed in Algorithm~\ref{lrcf:alg:qadi-complex}.
Note that, if only the feedback matrix $K$ is of interest, e.g. if the CARE
arises from an optimal control problem, there is no need to store the whole
low-rank factors $Z$, $Y$ since Algorithm~\ref{lrcf:alg:qadi-complex} requires
only the latest blocks to continue. This is again similar to the low-rank
Newton ADI solver \cite{BenLP08}, and not possible in the current version of the Cayley subspace iteration.}
\subsection{\change{Reducing} the use of complex arithmetic.}
To increase the efficiency of Algorithm~\ref{lrcf:alg:qadi-complex}, we \change{reduce} the use of complex arithmetic. We do so by taking shift $\sigma_{k+1} =
\conj{\sigma_k}$
immediately after the shift $\sigma_k \in \Complex \setminus \Real$ has been used, and by merging these two consecutive RADI steps into a single one.
This entire procedure will have only one operation involving complex matrices: a linear solve with the matrix $A^\tr - X_{k-1}G + \sigma_k I$ to
compute $V_k$. 
There are two key observations to be made here. First, by modifying the iteration slightly, one can ensure that
the matrices $K_{k+1}$, $R_{k+1}$, and $Y_{k+1}$ are real and can be computed by using real arithmetic only, as shown in the
upcoming technical proposition. Second, there is no need to compute $V_{k+1}$ at all to proceed with the iteration: the next matrix
$V_{k+2}$ will once again be computed by using the residual $R_{k+1}$, the same way as in Algorithm~\ref{lrcf:alg:qadi-complex}.
\begin{proposition}
    \label{lrcf:prop:qadi-real}
    Let $X_{k-1} = Z_{k-1} Y_{k-1}^{-1} Z_{k-1}^\tr \in \Real^{n \times n}$ denote the Riccati approximate solution computed after $k-1$ RADI steps.
    Assume that $R_{k-1},~V_{k-1},~K_{k-1}$ are real and that $\sigma := \sigma_{k} = \conj{\sigma_{k+1}} \in \Complex \setminus \Real$.
    Let: 
    \begin{align*}
        V_{r} &= (\real{V_k})^\tr B, \quad V_{i} = (\imag{V_k})^\tr B, \\
        F_1 &= \mb{c} -\real{\sigma} V_{r} - \imag{\sigma} V_{i} \\ \imag{\sigma}V_{r} - \real{\sigma}V_{i} \me, \quad
        F_2 = \mb{c} V_{r} \\ V_{i} \me, \quad
        F_3 = \mb{c} \imag{\sigma} I_p \\ \real{\sigma}I_p \me.
    \end{align*}
    Then $X_{k+1} = Z_{k+1} Y_{k+1}^{-1} Z_{k+1}^\tr$, where:
    \begin{align*}
        Z_{k+1} &= [Z_{k-1} \;\; \real{V_k} \;\; \imag{V_k}], \\
        Y_{k+1} &= \mb{cc} Y_{k-1} & \\ & \hat{Y}_{k+1} \me, \\
        \hat{Y}_{k+1} &= \mb{cc} I_p & \\ & 1/2 I_p \me - \frac{1}{4|\sigma|^2 \real{\sigma}} F_1 F_1^\tr - \frac{1}{4\real{\sigma}} F_2 F_2^\tr -
\frac{1}{2|\sigma|^2} F_3 F_3^\tr.
    \end{align*}
    The residual factor $R_{k+1}$ and the matrix $K_{k+1}$ can be computed as
    \begin{align*}
        R_{k+1} &= R_{k-1} + \sqrt{-2\real{\sigma}} \left( [\real{V_k} \;\; \imag{V_k}] \hat{Y}_{k+1}^{-1} \right) (:, 1:p), \\
        K_{k+1} &= K_{k-1} + [\real{V_k} \;\; \imag{V_k}] \hat{Y}_{k+1}^{-1} \mb{c} V_{r} \\ V_{i} \me. 
    \end{align*}
\end{proposition}
\begin{proof}
    The \change{basic} idea of the proof is similar to \cite[Theorem 1]{BenKS13}; 
    however, there is a major complication involving the matrix $Y$, which 
    in the Lyapunov case is simply equal to the identity.
    Due to the technical complexity of the proof, we only display key intermediate results.
    To simplify notation, we use indices $0$, $1$, $2$ instead of $k-1$, $k$, $k+1$, respectively.
    \\
    We start by taking the imaginary part of the defining relation for $R_0^{(2)}=R_0$ in Lemma~\ref{lrcf:tm:residual}:
    $$
        0 = (A^\tr - X_0 G + \real{\sigma}I) \cdot \imag{V_1} + \imag{\sigma} \cdot \real{V_1};
    $$
    thus
    $$
        V_1 = \real{V_1} + \ii \imag{V_1} = \frac{-1}{\imag{\sigma}} \underbrace{(A^\tr - X_0 G + \conj{\sigma} I)}_{A_0} \imag{V_1}.
    $$
    We use this and the Sherman--Morrison--Woodbury formula to compute $V_2$:
    \begin{align}
        V_2 
            &= \sqrt{-2\real{\sigma}} (A^\tr - X_1 G + \conj{\sigma} I)^{-1} R_1 \nonumber \\
            &= \sqrt{-2\real{\sigma}} (A^\tr - X_1 G + \conj{\sigma} I)^{-1} \cdot \frac{1}{\sqrt{-2\real{\sigma}}} \cdot (A^\tr - X_1 G - \conj{\sigma}I ) V_1
\nonumber \\
            &= V_1 - 2\conj{\sigma} (A^\tr - X_1 G + \conj{\sigma}I)^{-1} V_1 \nonumber \\
            &= V_1 - 2\conj{\sigma} \left( A_0 - V_1 \hY_1^{-1} (V_1^\tr G) \right)^{-1} V_1 \nonumber \\
            &= V_1 - 2\conj{\sigma} ( A_0^{-1}V_1 + A_0^{-1} V_1 ( \hY_1 - (V_1^\tr G)A_0^{-1}V_1)^{-1} (V_1^\tr G) A_0^{-1}V_1 ) \nonumber \\
            &= V_1 + 2\conj{\sigma} \imag{V_1} (\underbrace{\imag{\sigma} \hY_1 + (V_1^\tr B)V_{i}^\tr }_{S})^{-1} \hY_1; \label{lrcf:eq:prop-temp1}
    \end{align}
    \change{where we used $\imag{\sigma} A_0^{-1} V_1 = -\imag{V_1}$ to obtain the last line.}
    Next,
    \begin{align*}    
        X_2 &= X_0 + \smb V_1 & V_2 \sme \smb \hY_1^{-1} & \\ & \hY_2^{-1} \sme \smb V_1 & V_2 \sme^\tr \\
            &= X_0 + \smb \real{V_1} & \imag{V_1} \sme 
                \underbrace{\smb I & I \\ \ii I & \ii I + 2\conj{\sigma} S^{-1} \hY_1 \sme}_T
                \smb \hY_1^{-1} & \\ & \hY_2^{-1} \sme 
                \left(\smb \real{V_1} & \imag{V_1} \sme   \smb I & I \\ \ii I & \ii I + 2\conj{\sigma} S^{-1} \hY_1 \sme  \right)^\tr \\
            &= X_0 + \smb \real{V_1} & \imag{V_1} \sme 
                \big( \underbrace{T^{-\tr} \smb \hY_1 & \\ & \hY_2 \sme T^{-1}}_{\hat{Y}_2} \big)^{-1}
                \smb \real{V_1} & \imag{V_1} \sme^\tr.
    \end{align*}
    We first compute $T^{-1}$ by using the SMW formula once again:
    \begin{align*}    
        T^{-1}
            &= \left( \smb & I \\ \ii I & \ii I \sme + \smb I & \\ & S^{-1} \sme \smb I & \\ & 2\conj{\sigma}\hY_1 \sme \right)^{-1} \\
            &= \mb{cc} I + \frac{\ii}{2\conj{\sigma}} \hY_1^{-1}S & \frac{-1}{2\conj{\sigma}}\hY_1^{-1} S \\
                    \frac{-\ii}{2\conj{\sigma}} \hY_1^{-1}S & \frac{1}{2\conj{\sigma}} \hY_1^{-1}S
               \me,
    \end{align*}
    and, applying the congruence transformation with $T^{-1}$ to $\smb \hY_1 & \\ & \hY_2 \sme$ yields
    $$
        \hat{Y}_2
            = \mb{cc}
                    \hY_1 + \frac{\ii}{2\conj{\sigma}} S - \frac{\ii}{2{\sigma}} S^\tr +   \frac{1}{4|\sigma|^2} S^\tr \hY_1^{-1}S +   \frac{1}{4|\sigma|^2}
S^\tr \hY_1^{-1} \hY_2 \hY_1^{-1} S
            ~~~~&~~~\frac{-1}{2\conj{\sigma}} S                               + \frac{\ii}{4|\sigma|^2} S^\tr \hY_1^{-1}S + \frac{\ii}{4|\sigma|^2}
S^\tr \hY_1^{-1} \hY_2 \hY_1^{-1} S \\
                                                            \frac{-1}{2{\sigma}} S^\tr - \frac{\ii}{4|\sigma|^2} S^\tr \hY_1^{-1}S - \frac{\ii}{4|\sigma|^2}
S^\tr \hY_1^{-1} \hY_2 \hY_1^{-1} S
                  &                                                                        \frac{1}{4|\sigma|^2} S^\tr \hY_1^{-1}S +   \frac{1}{4|\sigma|^2}
S^\tr \hY_1^{-1} \hY_2 \hY_1^{-1} S
               \me.
    $$
    By using \eqref{lrcf:eq:prop-temp1}, it is easy to show
    \begin{align*}
        \hY_2 
            &= \change{I - \frac{1}{2\real{\sigma}} (V_2^\tr B) (V_2^\tr B)^\tr} \\
            &= -\hY_1 - \frac{1}{2\real{\sigma}} \left( 
                -2\sigma \imag{\sigma} \hY_1S^{-\tr}\hY_1 - 2\conj{\sigma}\imag{\sigma}\hY_1 S^{-1}\hY_1
                +4|\sigma|^2 \hY_1S^{-\tr} V_{i} V_{i}^\tr S^{-1} \hY_1
            \right).    
    \end{align*}
    Inserting this into the formula for $\hat{Y}_2$, all terms containing inverses of $\hY_1$ and $S$ cancel out. 
    By rearranging the terms that do appear in the formula, we get the expression from the
    claim of the proposition.

    Deriving the formulae for $R_2$ and $K_2$ is straightforward:
    \begin{align*}
        K_2 
            &= K_0 + \smb V_1 & V_2 \sme \smb \hY_1^{-1} & \\ & \hY_2^{-1} \sme \smb V_1 & V_2 \sme^\tr B\\
            &= K_0 + \left( \smb \real{V_1} & \imag{V_1} \sme \hat{Y}_2^{-1} \right) \left( \smb \real{V_1} & \imag{V_1} \sme^\tr B \right), \\
        R_2 
            &= R_0 + \sqrt{-2\real{\sigma}} \smb V_1 & V_2 \sme \smb \hY_1^{-1} \\ \hY_2^{-1} \sme \\
            &= R_0 + \sqrt{-2\real{\sigma}} \smb \real{V_1} & \imag{V_1} \sme \left( T \smb \hY_1^{-1} \\ \hY_2^{-1} \sme \right) \\
            &= R_0 + \sqrt{-2\real{\sigma}} \smb \real{V_1} & \imag{V_1} \sme \hat{Y}_2^{-1} (:, 1:p),
    \end{align*}
    where the last line is due to the structure of the matrix $T$. Since the matrix $\hat{Y}_2$ is real, so are $K_2$ and $R_2$.
\end{proof}
\begin{zAlgorithmFloat}
    \KwIn{matrices \change{$A, E \in \Real^{n \times n}$}, $B \in \Real^{n \times m}$, $C\in \Real^{p \times n}$.}
    \KwOut{approximation $X \approx Z Y^{-1} Z^\tr$ for the solution of the generalized equation
        \change{$A^\tr XE + E^\tr X A + C^\tr C - E^\tr X BB^\tr XE = 0$}. The matrices $Z$ and $Y$ are real.}

    \vspace*{0.3cm}
    $R = C^\tr$; $K = 0$; $Y = [\;]$; $Z = [\;]$\;
    \While{$\|R^\tr R\| \geq tol \cdot \|C C^\tr\|$}
    {
        Obtain the next shift $\sigma$\;
        \vspace*{0.3cm}    

        \eIf{first pass through the loop}
        {
            $V = \sqrt{-2\real{\sigma}} \cdot (A^\tr + \sigma \change{E^\tr})^{-1} R$\;
        }
        {
            $V = \sqrt{-2\real{\sigma}} \cdot (A^\tr - K B^\tr + \sigma \change{E^\tr})^{-1} R$; \tcp{Use SMW if necessary}
        }
        \vspace*{0.3cm}

        \eIf{$\sigma \in \Real$}
        {
            $Z = [Z \;\; V]$\;        
            $\tilde{Y} = I - \frac{1}{2\real{\sigma}} \cdot (V^\tr B) (V^\tr B)^\tr$;
            $Y = \smb Y & \\ & \tilde{Y} \sme$;
            $R = R + \sqrt{-2 \real{\sigma}} \cdot (\change{E^\tr} V \tilde{Y}^{-1})$\;
            $K = K + (\change{E^\tr} V \tilde{Y}^{-1}) \cdot (V^\tr B)$;
        }
        {
            $Z = [Z \;\; \real{V} \;\; \imag{V}]$\;
            $V_{r} = (\real{V})^\tr B; \quad V_{i} = (\imag{V})^\tr B$\;
            $F_1 = \mb{c} -\real{\sigma} V_{r} - \imag{\sigma} V_{i} \\ \imag{\sigma}V_{r} - \real{\sigma}V_{i} \me; \quad
                F_2 = \mb{c} V_{r} \\ V_{i} \me ; \quad
                F_3 = \mb{c} \imag{\sigma} I_p \\ \real{\sigma}I_p \me$ \;
            $\tilde{Y} = \mb{cc} I_p & \\ & 1/2 I_p \me - \frac{1}{4|\sigma|^2 \real{\sigma}} F_1 F_1^\tr - \frac{1}{4\real{\sigma}} F_2 F_2^\tr -
\frac{1}{2|\sigma|^2} F_3 F_3^\tr$\;
            $Y = \mb{cc} Y & \\ & \tilde{Y} \me$\;
            $R = R + \sqrt{-2\real{\sigma}} \cdot \change{E^\tr}\left( [\real{V} \;\; \imag{V}] \tilde{Y}^{-1} \right) (:, 1:p)$\;
            $K = K + \change{E^\tr}[\real{V} \;\; \imag{V}] \tilde{Y}^{-1} \mb{c} V_{r} \\ V_{i} \me$\;
        }
    }    
    \caption{The RADI iteration, \change{with reduced use of complex arithmetic}.}\label{lrcf:alg:qadi-real}
\end{zAlgorithmFloat}

\subsection{RADI iteration for the generalized Riccati equation.}
\change{Before we state the final implementation, we shall briefly mention the adaptation of the RADI algorithm for handling generalized Riccati equations}
\begin{equation}
    \label{lrcf:eq:gen-riccati}
    A^\tr XE + E^\tr X A + Q - E^\tr XGXE = 0.
\end{equation}
\change{%
Multiplying \eqref{lrcf:eq:gen-riccati} by $E^{-\tr}$ from the left and by $E^{-1}$ from the right leads to
\begin{equation}
    \label{lrcf:eq:gen-riccati-reverted}
    (AE^{-1})^\tr X + X (AE^{-1}) + E^{-\tr} C^\tr C E^{-1} - X BB^\tr X = 0.
\end{equation}
The generalized RADI algorithm is then easily derived by running ordinary RADI iterations for the Riccati equation 
\eqref{lrcf:eq:gen-riccati-reverted}, and deploying standard rearrangements to lower the computational expense
(such as solving systems with the matrix $A^\tr + \sigma E^\tr$ instead of $(AE^{-1})^\tr + \sigma I$).}
\change{Algorithm~\ref{lrcf:alg:qadi-real} shows the final implementation, taking into account Proposition~\ref{lrcf:prop:qadi-real} 
and the handling of generalized equations \eqref{lrcf:eq:gen-riccati}.}


            
        


\subsection{Shift selection}
\label{ssne}

The problem of choosing the shifts in order to accelerate the convergence of the Riccati {ADI} iteration is very similar to the one for
the Lyapunov ADI method. Thus we apply and discuss the techniques presented in \cite{Sab07,Pen00b,BenKS14,BujB14} in the context of the Riccati equation, 
and compare them in several numerical experiments. 
\change{
It appears natural to employ the heuristic Penzl
shifts \cite{Pen00b}. There, a small number of approximate eigenvalues of $A$ are 
generated. From this set the values which lead to the smallest magnitude of the 
rational function associated to the ADI iteration are selected in a heuristical 
manner. Simoncini and Lin \cite{LinSim13} have shown that the convergence of the Riccati 
ADI iteration is related to a rational function built from the 
stable eigenvalues of $\nice{H}$. This suggests to carry out the Penzl approach, but to use approximate 
eigenvalues for the Hamiltonian matrix $\nice{H}$ instead of $A$.
Note that, due to the low rank of $Q$ and $G$, we can expect that most of the eigenvalues of $A$ are close to the eigenvalues of $\nice{H}$, see 
the discussion in \cite{BujB14}. Thus in many cases the Penzl shifts generated by $A$ should suffice as well.
Penzl shifts require significant preprocessing computation: in order to approximate the eigenvalues of $M=A$ or $M=\nice{H}$, one
has to build Krylov subspaces with matrices $M$ and $M^{-1}$. All the shifts are computed in this preprocessing stage, and then
simply cycled during the RADI iteration.
Here, we will mainly focus on alternative approaches generating each shift just before it is used. This way we hope to compute a shift which will
better adapt
to the current stage of the algorithm.}
\\

%
%
%
%
%
%

{\bf Residual Hamiltonian shifts.}

One such approach is motivated by Theorem~\ref{lrcf:tm:residual_equation} and the discussion about shift selection in \cite{BujB14}.
The Hamiltonian matrix
$$
    \tilde{\nice{H}} = \mb{cc} \tilde{A} & G \\ \tilde{C}^\tr\tilde{C} & -\tilde{A}^\tr \me
$$
is associated to the residual equation \eqref{lrcf:eq:residual_equation}, where $\Xi = X_k$ is the approximation after $k$ steps of the RADI iteration.
If $(\lam, \smb r \\ q \sme)$ is a stable eigenpair of $\tilde{\nice{H}}$, and $\sigma_{k+1} = \lam$ is used as the shift, then
$$
    X_k \leq X_{k+1} = X_k - q (q^\tr r)^{-1} q^\tr \leq X.
$$
In order to converge as fast as possible to $X$, it is better to choose such an eigenvalue $\lam$ for which
the update is largest, i.e. the one that maximizes $\|q (q^\tr r)^{-1} q^\tr\|$.
Note that as the {RADI} iteration progresses and the residual matrix $\nice{R}(\Xi)=\tilde{C}^\tr \tilde{C}$ 
converges to zero, the structure of eigenvectors of $\tilde{\nice{H}}$ that belong to its stable eigenvalues is such that
$\|q\|$ becomes smaller and smaller.
Thus, one can further simplify the shift optimality condition, and use the eigenvalue $\lam$ such that the corresponding $q$ has
the largest norm---this is also in line with the discussion in \cite{BujB14}.

However, in practice it is computationally very expensive to determine such an eigenvalue, since the matrix $\tilde{\nice{H}}$ is of order $2n$.
We can approximate its eigenpairs through projection onto some subspace.
If $U$ is an orthonormal basis of the chosen subspace, then
$$
    (U^\tr \tilde{A} U)^\tr \pXtil + \pXtil (U^\tr \tilde{A} U) + (U^\tr \tilde{C}^\tr) (U^\tr \tilde{C}^\tr)^\tr - \pXtil (U^\tr B) (U^\tr B)^\tr \pXtil = 0
$$
is the projected residual Riccati equation with the associated Hamiltonian matrix
\begin{equation}
    \label{exp:eq:projected-residual}
    \pHtil = 
        \mb{cc} 
            U^\tr \tilde{A} U                               & (U^\tr B) (U^\tr B)^\tr \\
            (U^\tr \tilde{C}^\tr) (U^\tr \tilde{C}^\tr)^\tr & -(U^\tr \tilde{A} U)^\tr
        \me.
\end{equation}
Approximate eigenpairs of $\tilde{\nice{H}}$ are $(\hat{\lam}, \smb U \hat{r} \\ U \hat{q} \sme )$, where $(\hat{\lam}, \smb \hat{r} \\ \hat{q} \sme)$
are eigenpairs of $\pHtil$.
Thus, a reasonable choice for the next shift is such $\hat{\lam}$, for which $\|U \hat{q}\| = \|\hat{q}\|$ is the largest.

We still have to define the subspace $\spann{U}$.
One option is to use $V_k$ (or, equivalently, the last $p$ columns of the matrix $Z_k$), which works very well in practice unless $p=1$. 
When $p=1$, all the generated shifts are real, which can make the convergence slow in some cases.
Then it is better to choose the last $\ell$ columns of the matrix $Z_k$; usually already $\ell=2$ or $\ell=5$ or a small multiple of $p$ will suffice. 
An ultimate option is to use the entire $Z_k$, which we denote as $\ell=\infty$. 
This is obviously more computationally demanding, but it provides fast convergence in all cases we tested.
\\



{\bf Residual minimizing shifts.}
The two successive residual factors are connected via the formula
$$
    R_{k+1} = (A^\tr - X_{k+1}G - \conj{\sigma_{k+1}}I) (A^\tr - X_k G + \sigma_{k+1} I)^{-1} R_k.
$$
Our goal is to choose the shifts so that the residual drops to zero as quickly as possible. Locally, once $X_k$ is computed,
this goal is achieved by choosing $\sigma_{k+1}\in\LHP$ so that $\|R_{k+1}\|$ is minimized. This concept was proposed in \cite{BenKS14} for the
low-rank Lyapunov and Sylvester ADI methods. In complete analogy, we define
a rational function $f$ in the variable $\sigma$ by
\begin{equation}
    \label{exp:eq:resmin-function}
    f(\sigma) := \|(A^\tr - X_{k+1}(\sigma) G - \conj{\sigma} I) (A^\tr - X_k G + \sigma I)^{-1} R_k\|^2,
\end{equation}
and wish to find
$$
    \argmin_{\sigma \in \LHP} f(\sigma);
$$
note that $X_{k+1}(\sigma) = X_k + V_{k+1}(\sigma)\tilde{Y}_{k+1}^{-1}(\sigma) V_{k+1}^\tr(\sigma)$ is also a function of $\sigma$.
Since $f$ involves large matrices, we once again project the entire equation to a chosen subspace $U$, and
solve the optimization problem defined by the matrices of the projected problem. 
The optimization problem is solved numerically. Efficient optimization solvers use the gradient of the function $f$; 
after a laborious computation one can obtain an explicit formula for the case $p=1$:
$$
    \nabla f(\sigma_R, \sigma_I)
        = 
            \mb{c}
                \textcolor{white}{-}2\real{R_{k+1}^\tr \cdot \left( (\frac{1}{\sigma_R} I - \tilde{A}^{-1} - \frac{1}{2\sigma_R} (X_{k+1}G\tilde{A}^{-1} + X_{k+1}\tilde{A}^{-\tr}G)) \Delta \right)}  \\
                -2\imag{R_{k+1}^\tr \cdot \left( (\textcolor{white}{\frac{1}{\sigma_R} I}  -\tilde{A}^{-1} - \frac{1}{2\sigma_R} (X_{k+1} G \tilde{A}^{-1} - X_{k+1}\tilde{A}^{-\tr}G)) \Delta \right)}
            \me.
$$
Here $\sigma = \sigma_R + \ii \sigma_I$, $\tilde{A} = A^\tr - X_k G - \sigma I$, $X_{k+1} = X_{k+1}(\sigma)$, $R_{k+1} = R_{k+1}(\sigma)$, and $\Delta = R_{k+1}(\sigma) - R_k$. \\

For $p>1$, a similar formula can be derived, but one should note that the function $f$ is not necessarily differentiable at every point $\sigma$, see,
e.g., \cite{Ove92}.
Thus, a numerically more reliable heuristic is to artificially reduce the problem once again to the case $p=1$. 
This can be done in the following way: let $v$ denote the right singular vector corresponding to the largest singular value of the matrix $R_k$.
Then $R_k\in\Complex^{n \times p}$ in \eqref{exp:eq:resmin-function} is replaced by the vector $R_k v \in \Complex^{n}$.
\\

Since numerical optimization algorithms usually require a starting point for the optimization, the two shift generating
approaches may be combined: the residual Hamiltonian shift can be used as the starting point in the optimization for the second approach.
However, from our numerical experience we conclude that the additional computational effort invested in the post-optimization
of the residual Hamiltonian shifts often does not contribute to the convergence. 
\change{The main difficulty is the choice of an adequate subspace $U$ such that the
projected objective function approximates \eqref{exp:eq:resmin-function} well enough. This issue requires futher investigation.}
The rationale is given in the following example.

\begin{example}
    \label{shf:example:heatmap}
    Consider the Riccati equation 
    given in Example 5.2 of \cite{Sim07}: the matrix $A$ is obtained by
    the centered finite difference discretization of the differential equation
    $$
        \change{\partial_t u = \Delta u - 10 x \partial_x u - 1000 y \partial_y u - 10 \partial_z u + b(x, y)f(t),}
    $$
    on a unit cube with 22 nodes in each direction. \change{Thus $A$ is of order $n=10648$;
    the matrices $B \in \Real^{n \times m}$ and $C \in \Real^{p \times n}$ are generated at random, and in this
    example we set $m=p=1$ and $B=C^\tr$.}

    Suppose that $13$ RADI iterations have already been computed, and that we need to compute a shift to be used in the
    $14$th iteration.
    Let the matrix $U$ contain an orthonormal basis for $Z_{13}$.

    \begin{figure}[t]
        \begin{subfigure}{.48\textwidth}
            \centering
            \includegraphics[width=\textwidth]{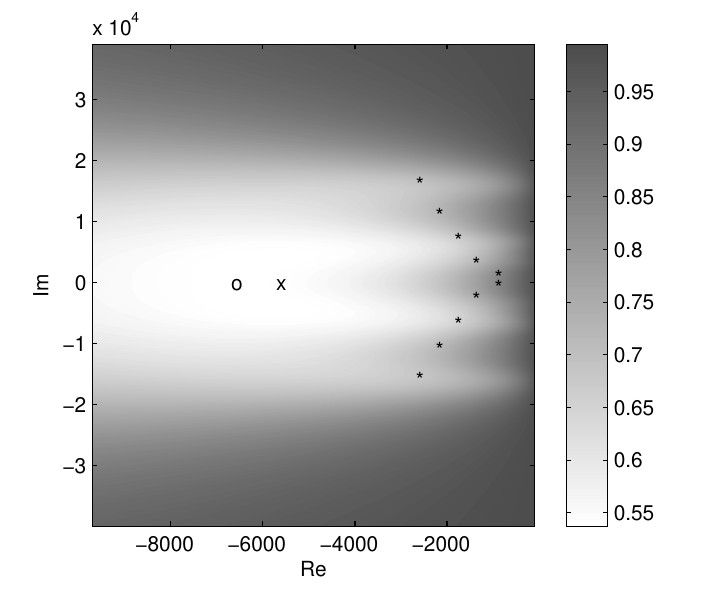}
            \caption{Ratios $\rho^{\mathsf{proj}}(\sigma)=\|R^{\mathsf{proj}}_{14}(\sigma)\| / \|R^{\mathsf{proj}}_{13}\|$ for the projected equation of dimension $13$.}
            \label{shf:fig:heatmap_proj}
        \end{subfigure}
        \hfill
        \begin{subfigure}{.48\textwidth}
            \centering
            \includegraphics[width=\textwidth]{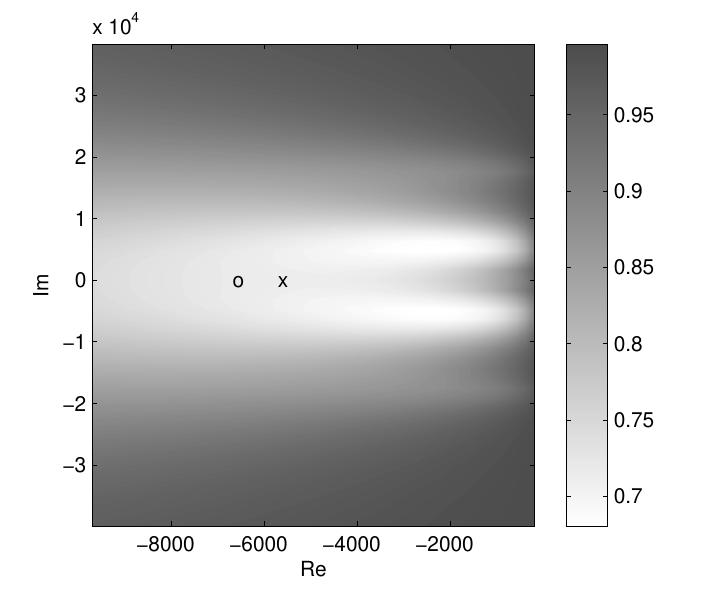}
            \caption{Ratios $\rho(\sigma)=\|R_{14}(\sigma)\| / \|R_{13}\|$ for the original equation of dimension $10648$.}
            \label{shf:fig:heatmap_full}
        \end{subfigure}
        \caption{Each point $\sigma$ of the complex plane is colored according to residual reduction obtained when $\sigma$ is taken as
            the shift in the $14$th iteration of RADI.}
    \end{figure}

    Figure \ref{shf:fig:heatmap_proj} shows a region of the complex plane; stars are at locations of the stable eigenvalues of the 
    projected Hamiltonian matrix \eqref{exp:eq:projected-residual}.
    The one eigenvalue chosen as the residual Hamiltonian shift $\sigma_{\mathsf{ham}}$ is shown as `x'.
    The residual minimizing shift $\sigma_{\mathsf{opt}}$ is shown as `o'.
    Each point $\sigma$ of the complex plane is colored according to the ratio 
    $\rho^{\mathsf{proj}}(\sigma)=\|R^{\mathsf{proj}}_{14}(\sigma)\| / \|R^{\mathsf{proj}}_{13}\|$,
    where $R^{\mathsf{proj}}$ is the residual for the {\it projected} Riccati equation.
    The ratio 
    $\rho^{\mathsf{proj}}(\sigma_{\mathsf{ham}}) \approx 0.54297$ is not far from the optimal ratio 
    $\rho^{\mathsf{proj}}(\sigma_{\mathsf{opt}}) \approx 0.53926$.

    On the other hand, Figure \ref{shf:fig:heatmap_full} shows the complex plane colored according to ratios 
    for the original system of order $10648$,
    $\rho(\sigma) = \|R_{14}(\sigma)\| / \|R_{13}\|$.
    Neither of the values 
    $\rho(\sigma_{\mathsf{ham}}) \approx 0.71510$ and 
    $\rho(\sigma_{\mathsf{opt}}) \approx 0.71981$ is optimal, but they both offer a reasonable reduction
    of the residual norm in the next step.
    In this case, $\sigma_{\mathsf{ham}}$ turns out even to give a slightly better residual reduction for the original equation
    than $\sigma_{\mathsf{opt}}$, making the extra effort in running the numerical optimization algorithm
    futile.
\end{example}


\section{Numerical experiments}
\label{num}

\newcommand{\specialcell}[2][c]{%
  \begin{tabular}[#1]{@{}c@{}}#2\end{tabular}}

In this section we show a number of numerical examples, with several objectives in mind.
First, our goal is to compare different low-rank implementations of the Riccati ADI algorithm mentioned in this paper: the low-rank qADI
proposed in \cite{WonB05,morWonB07}, the Cayley transformed subspace iteration \cite{LinSim13,MasOR14}, and the complex and real variants of the RADI iteration
\eqref{lrcf:eq:qadi-lr}.
Second, we compare performance of the RADI approach against other methods for solving large-scale Riccati equations, namely the
rational Krylov subspace method (RKSM) \cite{SimSM13a}, the extended block Arnoldi (EBA) method \cite{Sim07,HeyJ09},
and the Newton-ADI algorithm \cite{BenLP08,BenS10,BenHW15}.

Finally, we discuss various shift strategies for the RADI iteration described in the previous section.
\\

The numerical experiments are run on a desktop computer with a four-core Intel Core i5-4690K processor and 16GB RAM.
All algorithms and testing routines are implemented and executed in MATLAB R2014a, running on Microsoft Windows~8.1.

\begin{example}
    \label{num:ex:CUBE}
    Consider again the Riccati benchmark \bench{CUBE} from Example \ref{shf:example:heatmap}.
   \change{We use three versions of this example: the previous setting with $n=10648$, $m=p=1$ and $m=p=10$, 
    and later on a finer discretization with $n=74088$ and $m=10$, $p=1$.}

    \change{Table~\ref{ex:tbl:diffimplem} collects timings in seconds for four different low-rank implementations of the Riccati ADI algorithm. The table shows
only the time needed to run $80$ iteration steps;}
    time spent for computation of the shifts used by all four variants is not included (in this case, $20$ precomputed Penzl shifts were used). 
    All four variants compute exactly the same iterates, as we have proved in Theorem \ref{equiv:tm}.
\begin{table}[h]
\centering
 \begin{tabular}{|c|r|r|}
            \hline
            Implementation                                    & time, $m,p = 1$ & time, $m,p = 10$ \\
            \hline \hline
            Wong and Balakrishnan \cite{WonB05,morWonB07}     & $127.61$      & $750.89$ \\
            Cayley subspace iteration \cite{LinSim13,MasOR14} & $21.67$       & $167.02$ \\
            RADI -- Algorithm~\ref{lrcf:alg:qadi-complex}     & $21.51$       & $51.92$ \\
            RADI -- Algorithm~\ref{lrcf:alg:qadi-real}        & $11.14$       & $26.35$ \\ \hline
        \end{tabular}
\caption{Results obtained with different implementations for \bench{CUBE} with $n=10648$.}
\label{ex:tbl:diffimplem}        
\end{table}

    Clearly, the real variant of iteration \eqref{lrcf:eq:qadi-lr}, implemented as in Algorithm~\ref{lrcf:alg:qadi-real},
    outperforms all the others.\footnote{Note that the generated Penzl shifts come in complex conjugate pairs.}
    Thus we use this implementation in the remaining numerical experiments.
    The RADI algorithms mostly obtain the advantage over the Cayley subspace iteration \change{because of the cheap computation of the residual norm.  In the
latter algorithm, costly orthogonalization procedures are required for this task, and after some point 
these compensate the computational gains from the
easier linear systems (cf. Section~\ref{ssec:linsysRADI}).}
    Also, the times for the algorithm of Wong and Balakrishnan shown in the table do not include the (very costly) 
    computation of the residuals at all, so their actual execution times are even higher.

    Next, we compare various shift strategies for RADI, as well as EBA, RKSM, and Newton-ADI algorithms.
    For RADI, we have the following strategies:
    \begin{itemize}
        \item $20$ precomputed Penzl shifts (``RADI -- Penzl'') generated by using the Krylov subspaces of dimensions $40$ with matrices $A$ and $A^{-1}$;
        \item residual Hamiltonian shifts (``RADI -- Ham''), with $\ell=2p$, $\ell=6p$, and $\ell=\infty$;
        \item residual minimizing shifts (``RADI -- Ham+Opt''), with $\ell=2p$, $\ell=6p$, and $\ell=\infty$.
    \end{itemize}
    For the RKSM, we have implemented the algorithm so that the use of complex arithmetic is minimized by merging two
consecutive
    steps with complex conjugate shifts \cite{Ruh94c}. \change{We use the adaptive shift strategy, 
    implemented as described in
\cite{DruS11}. EBA and RKSM require the solution of a projected CARE which can become expensive if $p>1$. 
Hence, this small scale solution is carried out
only periodically in every 5th or every 10th step---in the results, we display the variant that was faster.}

    For all methods, the threshold for declaring convergence is reached once the relative residual is less than $tol=10^{-11}$.
    A summary of the results for all the different methods and strategies is shown in Table \ref{ex:tbl:results}.
    The column ``final subspace dimension'' displays the number of columns of the matrix $Z$, where $X\approx ZZ^\tr$ is
    the final computed approximation. Dividing this number by $p$ (for EBA, by $2p$), we obtain the number of iterations used in a particular 
    method.
    Just for the sake of completeness, we have also included a variant of the Newton-ADI algorithm \cite{BenLP08} with Galerkin projection
\cite{BenS10}. Without the Galerkin projection, the Newton-ADI algorithm could not compete with the other methods.
    The recent developments from \cite{BenHW15}, which make the Newton-ADI algorithm more competitive, are beyond the scope of this study.

    \change{It is interesting to analyze the timing breakdown for RADI and RKSM methods. These timings are listed in Table~\ref{ex:tbl:breakdown} for the
    \bench{CUBE} example with $m=p=10$ where a significant amount of time is spent for tasks other than solving linear systems.}

\begin{table}[h]
\centering
    \begin{tabular}{|c|r|r|} 
        \hline
        method & subtask & time, $m=p=10$ \\
        \hline \hline
        \multirow{3}{*}{\specialcell{RADI -- Penzl: \\ 135 iterations}} 
            & precompute shifts          & 5.31  \\
            & solve linear systems       & 43.24 \\
            \cline{2-3}
            & total                      & 49.19 \\
        \hline \hline
        \multirow{4}{*}[.5em]{\specialcell{RADI -- Ham, $\ell=2p$: \\ 139 iterations}} 
            & solve linear systems       & 46.42 \\
            & compute shifts dynamically & 1.76 \\
            \cline{2-3}
            & total                      & 48.79 \\
        \hline \hline
        \multirow{4}{*}[.5em]{\specialcell{RADI -- Ham, $\ell=6p$: \\ 100 iterations}} 
            & solve linear systems       & 32.73 \\
            & compute shifts dynamically & 2.75 \\
            \cline{2-3}
            & total                      & 35.92 \\
        \hline \hline
        \multirow{4}{*}[.5em]{\specialcell{RADI -- Ham, $\ell=\infty$: \\ 74 iterations}} 
            & solve linear systems       & 24.74 \\
            & compute shifts dynamically & 32.44 \\
            \cline{2-3}
            & total                      & 57.51 \\
        \hline \hline
        \multirow{4}{*}[-.7em]{\specialcell{RKSM -- adaptive: \\ 79 iterations}} 
            & solve linear systems       & \change{18.45} \\
            & orthogonalization          & \change{4.14} \\
            & compute shifts dynamically & \change{12.02} \\
            & solve projected equations  & \change{15.98} \\
            \cline{2-3}
            & total                      & \change{53.82} \\
        \hline
    \end{tabular}
    \caption{Times spend in different subtasks in the RADI iteration and RKSM for \bench{CUBE}
 with $m=p=10$.}
\label{ex:tbl:breakdown}
\end{table}

    As $\ell$ increases, the cost of computing shifts in RADI increases as well---the projection subspace 
    gets larger, and more effort is needed to orthogonalize its basis and compute the eigenvalue decomposition of the
    projected Hamiltonian matrix. This effort is, in the \bench{CUBE} benchmark, awarded by a decrease in the number of iterations.
    However, there is a trade-off here: the extra computation does outweigh the saving in the number of iterations for 
    sufficiently large $\ell$.
    Convergence history for \bench{CUBE} is plotted in Figure \ref{num:fig:CUBE}; to reduce the clutter, only the selected few methods are shown.

    \change{The fact that in each step RADI solves linear systems with $p+m$ right hand side vectors, compared to only $p$
    vectors in RKSM, may become noticable when $m$ is larger than $p$. This effect is shown in Table \ref{ex:tbl:results} for 
    \bench{CUBE} with $m=10$ and $p=1$. Unlike these two methods, EBA can precompute the LU factorization of $A$, and win by a large
    margin in this test case.}

    \begin{figure}[t]
        \begin{subfigure}{.5\textwidth}
            \centering
            \includegraphics[width=\textwidth]{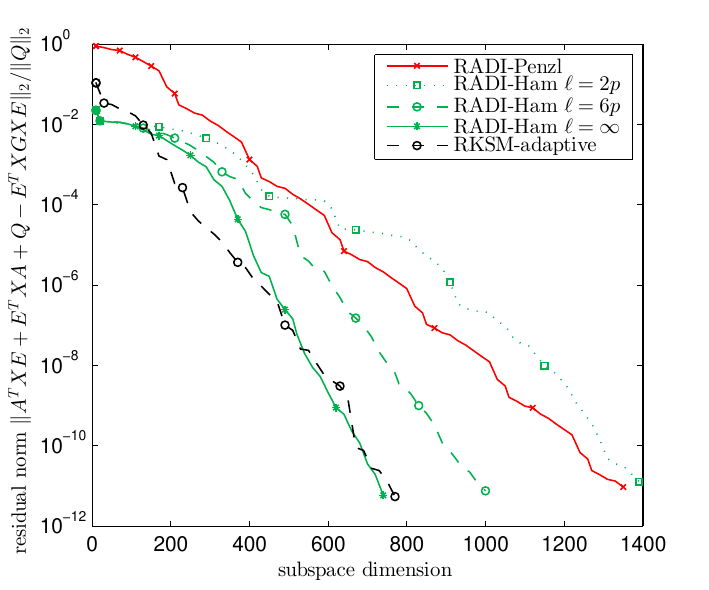}
            \caption{Relative residual versus the subspace dimension used by an algorithm.}
            \label{num:fig:CUBE-res}
        \end{subfigure}
        \hfill
        \begin{subfigure}{.5\textwidth}
            \centering
            \includegraphics[width=\textwidth]{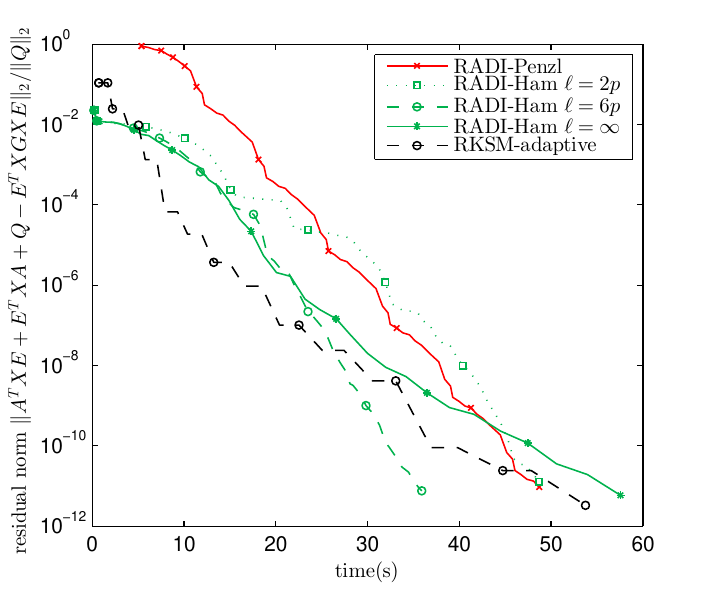}
            \caption{Relative residual versus time.}
            \label{num:fig:CUBE-mil}
        \end{subfigure}
        \caption{Algorithm performances for benchmark \bench{CUBE} ($n=10648, m=p=10$)}
        \label{num:fig:CUBE}
    \end{figure}
\end{example}

\begin{table}
{\footnotesize \vspace*{-1cm}
\begin{tabular}{|c|r|r|r|r|} 
    \hline
    example & method & no.~iterations & final subspace dim. & time \\
    \hline \hline
    \multirow{10}{*}{\rot{\specialcell{\bench{CUBE} \\ $n=10648$ \\ $m=p=1$}}}
          & RADI -- Penzl                  & $ 97$                  & $97 $          & $18.96$ \\
          & RADI -- Ham, $\ell=2p$         & $119$                  & $119$          & $17.10$ \\
          & RADI -- Ham, $\ell=6p$         & $ 99$                  & $99 $          & $14.15$ \\
          & RADI -- Ham, $\ell=\infty$     & $ 75$                  & $\mathbf{75}$  & $11.60$ \\
          & RADI -- Ham+Opt, $\ell=2p$     & $122$                  & $122$          & $17.87$ \\
          & RADI -- Ham+Opt, $\ell=6p$     & $103$                  & $103$          & $16.70$ \\
          & RADI -- Ham+Opt, $\ell=\infty$ & $108$                  & $108$          & $18.44$ \\
          & RKSM -- adaptive               & \change{$ 83$}         & \change{$83$}  & \change{$14.80$} \\
          & EBA                            & \change{$111$}         & \change{$222$} & \change{$\mathbf{6.23}$} \\  
          & Newton-ADI                     & $2$ outer, $296$ inner & $192$          & $42.11$ \\
    \hline
    \multirow{10}{*}{\rot{\specialcell{\bench{CUBE} \\ $n=10648$ \\ $m=p=10$}}}
          & RADI -- Penzl                  & $135$                  & $1350$          & $49.19 $ \\
          & RADI -- Ham, $\ell=2p$         & $139$                  & $1390$          & $48.79 $ \\
          & RADI -- Ham, $\ell=6p$         & $100$                  & $1000$          & $35.92 $ \\
          & RADI -- Ham, $\ell=\infty$     & $ 74$                  & $\mathbf{740}$  & $57.51 $ \\
          & RADI -- Ham+Opt, $\ell=2p$     & $ 87$                  & $870 $          & $\mathbf{30.59}$ \\
          & RADI -- Ham+Opt, $\ell=6p$     & $ 90$                  & $900 $          & $33.20 $ \\
          & RADI -- Ham+Opt, $\ell=\infty$ & $ 90$                  & $900 $          & $119.55$ \\
          & RKSM -- adaptive               & \change{$79$}          & \change{$790$}  & \change{$53.82$} \\  
          & EBA                            & \change{$91$}          & \change{$1820$} & \change{$230.57$} \\  
          & Newton-ADI                     & $2$ outer, $202$ inner & $1960$          & $75.60$ \\
    \hline
    \multirow{10}{*}{\rot{\specialcell{\bench{CUBE} \\ $n=74088$ \\ $m=10, \; p=1$}}}
            & RADI -- Penzl                  & $139$                  & $139$         & $1048.60$ \\
            & RADI -- Ham, $\ell=2p$         & $97$                   & $97$          & $617.62$ \\
            & RADI -- Ham, $\ell=6p$         & $81$                   & $81$          & $506.37$ \\
            & RADI -- Ham, $\ell=\infty$     & $72$                   & $72$          & $446.64$ \\
            & RADI -- Ham+Opt, $\ell=2p$     & $101$                  & $101$         & $621.38$ \\
            & RADI -- Ham+Opt, $\ell=6p$     & $93$                   & $93$          & $571.34$ \\
            & RADI -- Ham+Opt, $\ell=\infty$ & $63$                   & $\mathbf{63}$ & $387.43$ \\
            & RKSM -- adaptive               & $73$                   & $73$          & $338.78$ \\  
            & EBA                            & $81$                   & $162$          & $\mathbf{30.45}$ \\  
            & Newton-ADI                     & $2$ outer, $288$ inner & $968$         & $1546.29$ \\
    \hline
    \multirow{10}{*}{\rot{\specialcell{\bench{CHIP} \\ $n=20082$ \\ $m=1, \; p=5$}}}
          & RADI -- Penzl                  & $33$                  & $165$          & $51.57$ \\
          & RADI -- Ham, $\ell=2p$         & $36$                  & $180$          & $30.32$ \\
          & RADI -- Ham, $\ell=6p$         & $29$                  & $145$          & $24.36$ \\
          & RADI -- Ham, $\ell=\infty$     & $26$                  & $130$          & $22.64$ \\
          & RADI -- Ham+Opt, $\ell=2p$     & $29$                  & $145$          & $23.97$ \\
          & RADI -- Ham+Opt, $\ell=6p$     & $26$                  & $130$          & $22.26$ \\
          & RADI -- Ham+Opt, $\ell=\infty$ & $25$                  & $\mathbf{125}$ & $22.33$ \\
          & RKSM -- adaptive               & \change{$26$}         & \change{$130$} & \change{$23.33$} \\  
          & EBA                            & \change{$26$}         & \change{$260$} & \change{$\mathbf{6.69}$} \\  
          & Newton-ADI                     & $2$ outer, $64$ inner & $204$          & $54.04$\\
    \hline
    \multirow{10}{*}{\rot{\specialcell{\bench{IFISS} \\ $n=66049$, \\ $m=p=5$}}}
          & RADI -- Penzl                  & $>50$                 & $>250$           & \\
          & RADI -- Ham, $\ell=2p$         & $ 22$                 & $110 $           & $17.21$ \\
          & RADI -- Ham, $\ell=6p$         & $ 19$                 & $\mathbf{95}$    & $15.37$ \\
          & RADI -- Ham, $\ell=\infty$     & $ 20$                 & $100 $           & $17.46$ \\
          & RADI -- Ham+Opt, $\ell=2p$     & $ 27$                 & $135 $           & $21.12$ \\
          & RADI -- Ham+Opt, $\ell=6p$     &                       & did not converge & \\
          & RADI -- Ham+Opt, $\ell=\infty$ &                       & did not converge & \\
          & RKSM -- adaptive               & $26$                  & $130$            & $22.28$ \\  
          & EBA                            & $11$                  & $110$            & \change{$\mathbf{9.26}$} \\  
          & Newton-ADI                     & $2$ outer, $46$ inner & $250$            & $38.05$ \\
    \hline
    \multirow{10}{*}{\rot{\specialcell{\bench{RAIL} \\ $n=317377$, \\  $m=7, \; p=6$}}}
          & RADI -- Penzl                  & $66$                  & $396$          & $182.60$ \\
          & RADI -- Ham, $\ell=2p$         & $49$                  & $294$          & $131.34$ \\
          & RADI -- Ham, $\ell=6p$         & $43$                  & $258$          & $127.11$ \\
          & RADI -- Ham, $\ell=\infty$     & $46$                  & $276$          & $197.06$ \\
          & RADI -- Ham+Opt, $\ell=2p$     & $46$                  & $276$          & $124.13$ \\
          & RADI -- Ham+Opt, $\ell=6p$     & $40$                  & $240$          & $\mathbf{120.04}$ \\
          & RADI -- Ham+Opt, $\ell=\infty$ & $39$                  & $\mathbf{234}$ & $158.89$ \\
          & RKSM -- adaptive               & $41$                  & $246$          & $188.60$ \\   
          & EBA                            & $91$                  & $1092$         & $916.21$ \\  
          & Newton-ADI                     & $1$ outer, $62$ inner & $372$          & $279.90$ \\
    \hline
    \multirow{10}{*}{\rot{\specialcell{\bench{LUNG} \\ $n=109460,$ \\ $m=p=10$}}}
            & RADI -- Penzl                  &      & did not converge & \\
            & RADI -- Ham, $\ell=2p$         & $31$ & $310$            & $30.03$ \\
            & RADI -- Ham, $\ell=6p$         & $28$ & $280$            & $30.22$ \\
            & RADI -- Ham, $\ell=\infty$     & $26$ & $260$            & $34.83$ \\
            & RADI -- Ham+Opt, $\ell=2p$     & $25$ & $250$            & $22.33$ \\
            & RADI -- Ham+Opt, $\ell=6p$     & $17$ & $\mathbf{170}$   & $\mathbf{17.74}$ \\
            & RADI -- Ham+Opt, $\ell=\infty$ & $17$ & $\mathbf{170}$   & $19.02$ \\
            & RKSM -- adaptive               & $61$ & $610$            & $114.22$ \\  
            & EBA                            &      & did not converge & \\  
            & Newton-ADI                     &      & did not converge & \\
        \hline
\end{tabular}
\caption{Results of the numerical experiments.}
\label{ex:tbl:results}
}
\end{table}

\begin{example}
    \label{num:ex:CHIP}\label{num:ex:FLOW}
    Next, we run the Riccati solvers for the well-known benchmark example 
    \bench{CHIP}.
    All coefficient matrices for the Riccati equation are taken as they are found in the 
    Oberwolfach Model Reduction Benchmark Collection \cite{Obe05}.
    Here we solve the generalized Riccati equation \eqref{lrcf:eq:gen-riccati}.

    The cost of precomputing shifts is very high in case of \bench{CHIP}.
    One fact not shown in the table is that all algorithms which compute shifts dynamically have already solved the Riccati equation
    before \change{``RADI -- Penzl'' has even started}.

\end{example}

\begin{example}
    \label{num:ex:IFISS}
    We use the \bench{IFISS} 3.2. finite-element package \cite{ifiss} to generate the coefficient matrices 
    for a generalized Riccati equation.
    We choose the provided example T-CD 2 which represents a finite element discretization of a two-dimensional 
    convection diffusion equation on a square domain.
    The leading dimension is $n=66049$, with $E$ symmetric positive definite, and $A$ non-symmetric.
    The matrix $B$ consists of $m=5$ randomly generated columns, and $C=[C_1, \; 0]$ with random $C_1\in\Real^{5 \times 5}$ ($p=5$).

    In this example, the RADI iteration with Penzl shifts converges very slowly. The
    RADI iteration with dynamically generated shifts are quite fast, and the final subspace dimension is
    smallest among all methods.
    On the other hand, the version with residual minimizing shifts does not converge for $\ell=6p, \infty$: 
    it quickly reaches the relative residual of
    about $10^{-7}$, and then gets stuck by continually using shifts very close to zero.
\end{example}

\begin{example}
    \label{num:ex:rail}\change{
    The example \bench{RAIL} is a larger version of the steel profile cooling model from the Oberwolfach Model Reduction Benchmark Collection
\cite{Obe05}. A finer finite element discretization was used for the heat equation resulting in
   a generalized CARE $n=317377$, $m=7$, $p=6$, and $E$ and $A$ symmetric positive and negative definite, respectively.
   Once again, there is a trade-off between (questionably) better shifts with larger $\ell$ and faster computation with lower $\ell$. }
\end{example}

\begin{example}
    \label{num:ex:lung}\change{
    The final example \bench{LUNG} from the UF Sparse Matrix Collection models temperature and water vapor transport in the human lung.
    It provides matrices with leading dimension $n=109460$, $E=I$, $A$ nonsymmetric, and $B,C$ are generated as random matrices with 
    $m=p=10$.
    This example shows the importance of proper shift generation: precomputed shifts are completely useless, while dynamically generated
    ones show different rates of success.
    The projection based methods (RKSM, EBA) 
    encountered problems at the numerical solution of the projected ARE. Either the complete algorithm broke down or convergence speed was reduced. 
    Similar issues were encountered at the Galerkin acceleration stage in the Newton-ADI method.}
\end{example}

Let us summarize the findings from these and a number of other numerical examples we used to test the algorithms.
\change{Clearly, using dynamically generated shifts for the RADI iteration has many benefits compared to the precomputed Penzl shifts.
Not only that the number of iterations and running time are reduced, but the convergence is more reliable.
Further, there frequently exists a small value of $\ell$ for which one or both of the dynamical shift strategies converge 
in a number of iterations comparable to runs with $\ell=\infty$, and in far less time.
However, an a-priori method of determining a sufficiently small $\ell$ with such properties is still to be found, and a topic of our future research.
}
\change{RADI appears to be quite competitive with other state of the art algorithms for solving large scale CAREs.
It frequently generates solutions using the lowest dimensional subspace.
Since RADI iterations do not require any orthogonalization nor solving projected CAREs,
the algorithm may outperform RKSM and EBA in problems where the final subspace dimension is high.
On the other hand, the later methods may have advantage when running time is
dominated by solving linear systems.
It seems that for now, there is no single algorithm of choice that would consistently and reliably run fastest.
}

\begin{figure}[t]
    \begin{subfigure}{.5\textwidth}
        \centering
        \includegraphics[width=\textwidth]{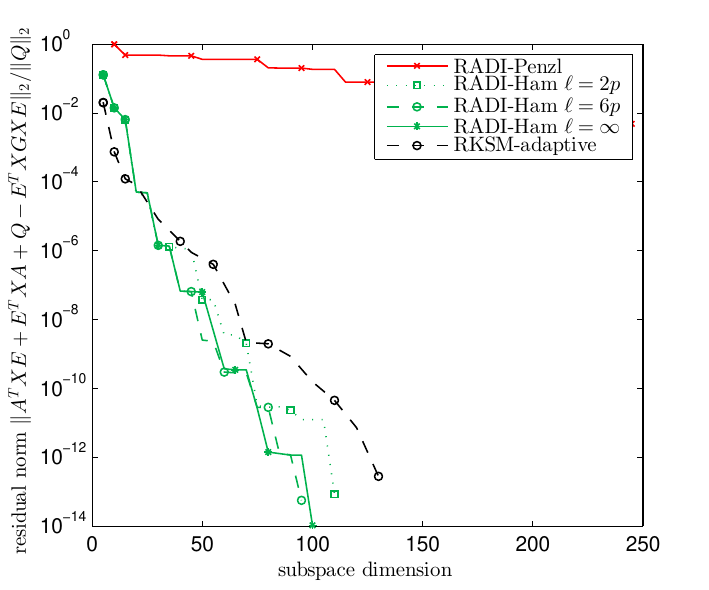}
        \caption{Relative residual versus the subspace dimension used by an algorithm.}
        \label{num:fig:IFISS-res}
    \end{subfigure}
    \hfill
    \begin{subfigure}{.5\textwidth}
        \centering
        \includegraphics[width=\textwidth]{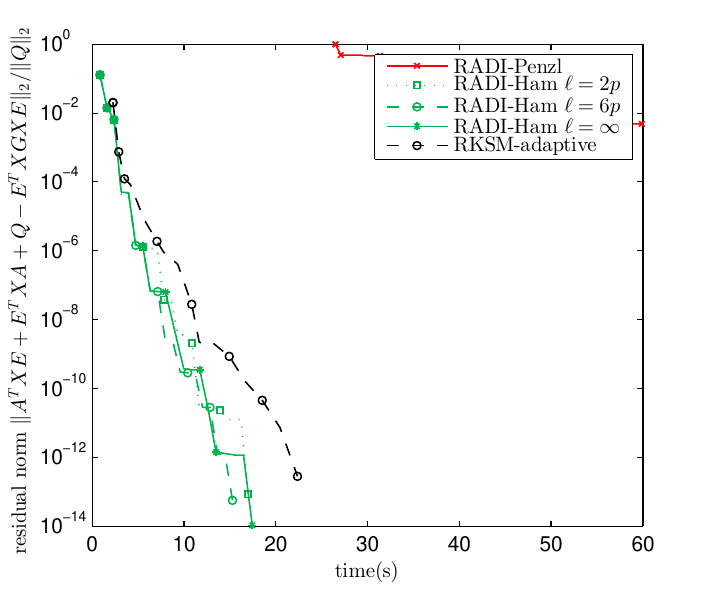}
        \caption{Relative residual versus time.}
        \label{num:fig:IFISS-mil}
    \end{subfigure}

    \caption{Algorithm performances for benchmark \bench{IFISS} ($n=66049, m=p=5$)}
    \label{num:fig:IFISS}
\end{figure}


\section{Conclusion}
\label{con}

In this paper, we have presented a new low-rank RADI algorithm for computing solutions of large scale Riccati equations.
We have shown that this algorithm produces exactly the same iterates as three previously known methods
(for which we suggest the common name ``Riccati ADI methods''), 
but it does so in a computationally far more efficient way.
As with other Riccati solvers, the performance is heavily dependent on the choice of shift parameters.
We have suggested several strategies on how this may be done; some of them show very promising results, making
the RADI algorithm competitive with the fastest large scale Riccati solvers.

\section{Acknowledgements}
    Part of this work was done while the second author
    was a postdoctoral researcher at Max Planck Institute Magdeburg, Germany.
    The second author would also like to acknowledge the support of the Croatian
    Science Foundation under grant HRZZ-9345.

\bibliographystyle{plain}
\bibliography{qADI_LR}

\end{document}